\newtheorem{thm}{Theorem}
\newtheorem{lem}[thm]{Lemma}
\theoremstyle{definition}
\newtheorem{rem}{Remark}
\def \no#1#2#3 {{\bf #1} (#3), #2.}
\def \eds#1#2#3 {#1, #2, #3.}
\newcommand{\im}{\mathrm{Im}}
\newcommand{\tp}{\widetilde{\phi}}
\newcommand{\ta}{\widetilde{\alpha}}
\def\R{{\mathbb R}}
\def\d{{\rm d}}
\def\T{{\mathbb T}}
\def\:{{\colon}}
\def\be#1{\begin{equation}\label{#1}}
\def\ee{\end{equation}}
\def\<{\langle}
\def\>{\rangle}
\def\coloneqq{:=}
\newcommand{\lec}{\lesssim}
\newcommand{\bs}{\begin{split}}
\newcommand{\essss}{\end{split}}
\newcommand{\eqnb}{\begin{equation}}
\newcommand{\eqne}{\end{equation}}
\renewcommand{\ee}{\mathrm{e}}
\newcommand{\p}{\partial}
\newcommand{\ii}{{\rm in}}
\newcommand\out{{\rm out}}
\renewcommand{\T}{\mathbb{T}}
\renewcommand{\R}{\mathbb{R}}
\newcommand{\C}{\mathbb{C}}
\newcommand{\Z}{\mathbb{Z}}
\renewcommand{\d}{\mathrm{d}}
\begin{document}

\title[Instability of shear flows]{A simple proof of linear instability of shear flows with application to vortex sheets}
\author[A.~Kumar]{Anuj Kumar}
\address{Department of Mathematics, Florida State University, Tallahassee, FL 32306}
\email{ak22cm@fsu.edu}
\author[W.~S.~O\.za\'nski]{Wojciech O\.za\'nski}
\address{Department of Mathematics, Florida State University, Tallahassee, FL 32306}
\email{wozanski@fsu.edu}
\maketitle

\date{}

\medskip

\begin{abstract}
We consider the construction of  linear instability of parallel shear flows, which was developed by Zhiwu Lin (\emph{SIAM J. Math. Anal.} 35(2), 2003). We give an alternative simple proof in Sobolev setting of the problem, which exposes the mathematical role of  the Plemelj-Sochocki formula in the emergence of the instability, as well as does not require the cone condition. Moreover, we localize this approach to obtain an approximation of the Kelvin-Helmholtz instability of a flat vortex sheet. 
\end{abstract}


\noindent\thanks{\em Keywords:\/}
shear flows, Rayleigh equation, instability 

\section{Introduction}\label{sec_intro}

We consider the 2D incompressible Euler equations
\[
\begin{split} u_t + (u\cdot \nabla ) u + \nabla p &=0\\
\mathrm{div}\,u &=0
\end{split}
\]
for $x\in \T$, $y\in (-1,1)$ with impermeability boundary conditions $u_2(x,-1,t)= u_2(x,1,t)=0$. We linearize the equations around a shear flow $u=(U(y),0)$, and we consider perturbations of the form $v=(\Psi_y , - \Psi_x)$ for the stream function $\Psi $ of the form
\eqnb\label{stream_fcn}
\Psi (x,y,t) = \phi (y) \ee^{i\alpha (x-ct)}.
\eqne
Then the linearized Euler equations become 
\eqnb\label{eq_phi_basic}
-\phi'' + \alpha^2 \phi + \frac{U''}{U-c } \phi =0,
\eqne
often referred to as the \emph{Rayleigh equation},  with the boundary conditions
\eqnb\label{BCs_phi}
\phi (-1)=\phi(1)=0.
\eqne
 We assume that $U\in C^5 ([-1,1])$ is such that 
\eqnb\label{Kgeq0}
- \frac{U''}{U} \geq 0,
\eqne
 and we make the following assumption.\\

\textbf{Assumption A} $U$ has exactly one zero, say, $U(a)=0$ for some $a\in (-1,1)$.\\

We note that then \eqref{Kgeq0} implies that $a$ is the only inflection point of $U$.  We also note that the case of $U$ having finitely many zeros is analogous, and we only consider Assumption A for simplicity. We also note that one can, equivalently, consider an inflection point which is not a zero of $U$, simply by considering $U-U(a)$ instead of $U$.

In the case $c=0$ the problem \eqref{eq_phi_basic}--\eqref{BCs_phi} is of the Sturm-Liouville type, and so there exists the maximal eigenvalue, which we assume to be positive. We will denote it by $\ta^2$, with the corresponding (single) eigenfunction $\tp \in L^2 (-1,1)$, which is positive in $(-1,1)$ (see \cite[Ch.~XI, Theorem~4.1]{hartman}). In particular 
\eqnb\label{phi_nonzero}
\tp (a) \ne 0,
\eqne
and \eqref{eq_phi_basic}--\eqref{BCs_phi} imply that $\tp \in H^1_0 (-1,1)$.
We further note that our assumption of $\ta^2 >0$ can be guaranteed by choosing $U$ appropriately. Namely, by the Minimization Principle, 
\[
-\ta^2 = \min \left\lbrace \int_{-1}^1 |\phi' |^2 + \int_{-1}^1 \frac{U''}U |\phi|^2 \, : \, \phi \in H^1_0, \int_{-1}^1 |\phi |^2 =1 \right\rbrace,
\]
see \cite[(5.6.5)]{haberman} or \cite[(7)]{lin_siam}, we see that $\ta^2 >0$ if $U''/U$ is a sufficiently negative function. This can be guaranteed, for example, by ensuring that $U'''$ is much larger than $U$ in a neighbourhood of $a$.

The problem of linear instability is concerned with finding a solution $\phi$ to \eqref{eq_phi_basic} with some $c\in \C$ with $\im\,c>0$, so that the stream function \eqref{stream_fcn} of the perturbation grows in $t$. In this context the eigenfunction $\tp$ provides a \emph{neutral limiting mode}, i.e. it is a solution with $c=0$, but perturbing this solution (i.e. treating it as the limiting mode) may give us the desired instability. 

To be more precise we now consider a perturbation $(\tp +\psi , \ta^2-\epsilon , -c)$ of $(\tp , \ta^2 , 0)$, in which case the Rayleigh equation becomes 
\eqnb\label{rayleigh}
-\phi'' + \alpha^2 \phi - \varepsilon \phi + \frac{U''}{U-c } \phi =0,
\eqne
and the problem of neutral limiting linear instability becomes: For sufficiently small $\varepsilon $ find $c(\varepsilon )\in \C$ such that, $\im\, c>0$, and the Rayleigh equation \eqref{eq_phi_basic} has a nontrivial solution $\phi$, which we will refer to as the \emph{unstable solution}.

We note that this problem has a long history, with the strategy of determining neutral limiting modes going back to the work of  Tollmien~\cite{tollmien}. We note that a necessary condition for instability is that $U$ has at least one inflection point, which was shown by Lord Rayleigh~\cite{rayleigh}, and later improved by Fj\o rtoft~\cite{fjortoft}. Moreover Howard~\cite{howard_64} estimated the maximal number of possible unstable modes and later Faddeev \cite{faddeev} studied  spectral properties of the Rayleigh equation. Furthermore, Friedlander and Howard~\cite{friedlander_howard} studied the particular velocity profile $U(y) = \cos (my)$ using the method of continued fractions. We also note some modern developments, including Vishik's papers \cite{vishik_1,vishik_2} (and the review article \cite{ABCD}) on instabilities of $2$D radial vortices and the proof of nonuniqueness of the forced 2D Euler equations. A new method of proving linear instability and another proof of Vishik's result was recently developed by Castro~et.~al.~\cite{cfms}, see also a very recent result of Dolce and Mescolini~\cite{dolce_mescolini}, who use a trick of Golovkin~\cite{golovkin} to obtain a simplified proof.   For nonlinear instability we refer the reader to \cite{lin3,friedlanderstraussvishikearly, friedlanderstraussvishik,grenier,dolce_mescolini}\\

We also emphasize that the first rigorous construction of unstable solutions $\phi$ to \eqref{rayleigh} is due to Zhiwu Lin~\cite{lin_siam}. We also note that Lin~\cite{lin_siam} has also extended the curve $\varepsilon \to c(\varepsilon)$ from the neighbourhood of $\varepsilon=0$ to the entire range $(0, \ta^2)$, as well as establishes nonlinear instability in \cite{lin3}. \\

The approach of Lin is based on the shooting method of solving ODEs. To be precise, let us denote by $\phi_1 (y;c,\varepsilon )$ the solution to \eqref{rayleigh} such that $\phi_1 (-1 )=0$ and  $\phi_1'(-1)=\tp'(0)$, and let
\[
I(c,\varepsilon ) \coloneqq \phi_1 (1;c,\varepsilon ).
\]
We note that an expression for $I$ could be derived in terms of the Green's function, using the Wronskian \cite[p.~335]{lin_siam}, and so the problem of finding  $c,\varepsilon$ for which the unstable solution $\phi$ exists reduces to studying zeros of $I$. One can then explicitly compute the expressions for 
\eqnb\label{ders_I}
\p_\varepsilon I\quad \text{ and }\quad \p_c I,
\eqne
see \cite[(37)--(38)]{lin_siam}. Consequently, one can restrict oneself to a sufficiently small triangle contained in the upper half-plane of $\C$ to find the curve of solutions $c=c(\varepsilon)$ for $\varepsilon >0$ close to $0$, using the Banach Contraction Mapping Theorem of the triangle, see \cite[Theorem~4.1]{lin_siam} for details. 

Crucially, the fixed point argument relies on the Plemelj-Sochocki \cite{plemelj,sochocki} formula,
\eqnb\label{plemelj_formula}
\frac{1}{x+ c} \to - i \pi \delta_{x=0} +\mathrm{p.v.} \frac{1}x\qquad \text{ as } c\to 0, \im \,c>0
\eqne
(see Lemma~\ref{L_plemelj} for some details), which is used in the computation of \eqref{ders_I} in the limit $c,\varepsilon \to 0$, see~\cite[(41)]{lin_siam}. One can think of the formula \eqref{plemelj_formula} as the fundamental reason why the instability occurs.  This can be seen through the asymptotic expansion
\eqnb\label{c_expansion}
c(\varepsilon ) = -\frac{\varepsilon }{\lambda } + o (\varepsilon )\qquad \text{ as } \varepsilon \to 0^+,
\eqne
which follows from \eqref{reduced_new}, \eqref{exp_G} below, where 
\[
\lambda \coloneqq \lim_{\im \,c>0, \,c\to 0} \int_{-1}^1 \frac{-U''(y) |\tp (y)|^2 }{(U(y)-c)U(y)} \d y.
\]
Indeed, we will use the Plemelj-Sochocki formula \cite{plemelj,sochocki} (in \eqref{limit_of_gamma} below) to see that,
\eqnb\label{lambda_fact}
\lambda = C - i\pi \frac{U''(a)\tp (a)^2}{|U'(a)|}
\eqne
for some $C\in \R$. Thus $\im\frac{-1}\lambda  >0$, and hence \eqref{c_expansion} shows that the solution curve $c(\varepsilon )$ moves inside the upper half-plane of $\C$ as soon as $\varepsilon $ becomes positive. Note that this part vanishes if the limit is taken along $c\in \R$. 

We note that the formula \eqref{lambda_fact} is classical, and in fact $C= -\mathrm{p.v.} \int_{-1}^1 \frac{U''(y)\tp (y)^2}{U(y)^2}\d y$, see for example \cite[p.~13, (2.20)]{drazin_howard} or Lin \cite[(35)]{lin_siam}, who proved it by dividing the two quantities in \eqref{ders_I}\footnote{Our proof \eqref{limit_of_gamma} of \eqref{lambda_fact} uses merely the Dominated Convergence Theorem (and \eqref{plemelj_formula}), which makes it a bit  simpler.}.

We emphasize that it is not clear to what extent the neutral limiting instabilities can be localized in space, since the eigenvalue problem \eqref{rayleigh}, at $\varepsilon=0$, $c=0$, is global. On the other hand, the Rayleigh criterion (of existence of an inflection point) is entirely local. Some insight into this problem was recently provided by Liu and Zeng \cite{liu_zeng}, who apply a shooting method and a gluing procedure across a few ``layers'' of a Rayleigh-type equation arising in the context of capillary waves. \\

In this note we first offer an alternative proof of existence of unstable solutions to \eqref{rayleigh}.

\begin{thm}[Linear instability of shear flow in Sobolev spaces]\label{T01}
For all sufficiently small $\varepsilon >0$ there exists $c=c(\varepsilon ) \in \C$ with $\im\,c>0$ such that the Rayleigh equation \eqref{rayleigh} has a solution $\phi\in H^1_0 (-1,1)$ such that $\phi = \tp +\psi$ with $\| \psi \|_{H^1} \lec |c| + \varepsilon$.
\end{thm}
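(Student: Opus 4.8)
The plan is to set up the problem as a fixed-point equation for the perturbation $\psi = \phi - \tp$ directly in $H^1_0(-1,1)$, rather than through the shooting functional $I(c,\varepsilon)$. Substituting $\phi = \tp + \psi$ into \eqref{rayleigh} and using that $\tp$ solves \eqref{eq_phi_basic} at $\varepsilon = 0$, $c=0$, one obtains an equation of the schematic form
\[
-\psi'' + \alpha^2 \psi - \ta^2 \psi = \varepsilon (\tp + \psi) + \frac{U''}{U}\,\tp - \frac{U''}{U-c}(\tp + \psi).
\]
The operator $L := -\partial_y^2 + \alpha^2 - \ta^2$ with homogeneous Dirichlet conditions has $\tp$ in its kernel, so it is not invertible; the standard remedy is a Lyapunov--Schmidt reduction. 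First I would split $H^1_0$ (or rather $L^2$) orthogonally as $\sp\{\tp\} \oplus X$, where $X = \tp^\perp$, and decompose the equation into its projection onto $X$ (the \emph{range equation}) and its projection onto $\sp\{\tp\}$ (the \emph{bifurcation equation}). On $X$ the operator $L$ is invertible with bounded inverse (the next Sturm--Liouville eigenvalue is strictly below $\ta^2$, i.e. $-$ next eigenvalue of the S-L problem is strictly larger, so $L$ is bounded below on $X$), so the range equation can be solved by the Banach contraction mapping theorem for $(c,\varepsilon)$ small, yielding a unique small $\psi = \psi(c,\varepsilon) \in X$ with $\|\psi\|_{H^1} \lesssim |c| + \varepsilon$; this is exactly the bound claimed in the theorem. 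Here the only subtlety is that the term $\frac{U''}{U-c}$ must be controlled uniformly as $\im c \to 0^+$: since $U\in C^5$ has a simple zero at $a$ (as $U'(a)\neq 0$, which follows because $a$ is the unique inflection point and $U(a)=0$ with $-U''/U\ge 0$), the multiplication operator $\phi \mapsto \frac{U''}{U-c}\phi$ is bounded on $L^2$ uniformly in small $c$ with $\im c \ge 0$, and depends continuously on $c$ in that region by dominated convergence together with the Plemelj--Sochocki formula \eqref{plemelj_formula}; this is where Lemma~\ref{L_plemelj} enters.

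The remaining bifurcation equation is a \emph{scalar} equation: projecting the whole equation against $\tp$ and using that $L\psi \perp \tp$, it reduces to
\[
G(c,\varepsilon) := \int_{-1}^1 \Big( \varepsilon - \frac{U''}{U-c} + \frac{U''}{U}\Big)\,\tp\,(\tp + \psi(c,\varepsilon))\,\d y = 0,
\]
or, after dividing by $\varepsilon$ and isolating the dominant terms (which is the content of the equations \eqref{reduced_new}, \eqref{exp_G} referred to in the introduction), an equation of the form $\varepsilon \, \|\tp\|_{L^2}^2 + \varepsilon \int \tp\psi/\varepsilon \cdots - c\,\lambda \,\|\cdots\| + o(|c|+\varepsilon) = 0$ leading to the asymptotics $c(\varepsilon) = -\varepsilon/\lambda + o(\varepsilon)$. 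Concretely I would show $G$ is continuous on a neighbourhood of $(0,0)$ in $\{\im c \ge 0\}\times[0,\infty)$, compute $\partial_c G(0,0) = -\lambda \,\tp(a)^2 \cdot(\text{sign-fixing constant})$... more precisely that the $c$-derivative is $-\lambda$ times a nonzero constant, with $\lambda$ as in \eqref{lambda_fact} having strictly negative imaginary part, and $\partial_\varepsilon G(0,0) = \|\tp\|_{L^2}^2 \neq 0$. Then the implicit function theorem (valid even though we only have one-sided $c$, because $\partial_c G \neq 0$ lets us solve for $c$ as an analytic-in-the-interior, continuous-up-to-the-boundary function of $\varepsilon$) gives the curve $c = c(\varepsilon)$, and \eqref{c_expansion}--\eqref{lambda_fact} force $\im c(\varepsilon) > 0$ for $\varepsilon > 0$ small.

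The main obstacle, and the place where the Plemelj--Sochocki formula does the essential work, is establishing that $\partial_c G(0,0)$ (equivalently $\lambda$) exists as a one-sided limit from $\im c > 0$ and has nonzero imaginary part. The difficulty is that $\frac{U''}{U-c}$ is genuinely singular at $c=0$ near $y=a$, so one cannot naively differentiate under the integral; instead one writes $\frac{1}{U(y)-c} = \frac{1}{U'(a)(y-a) - c + O((y-a)^2)}$ and applies \eqref{plemelj_formula} (rescaled by $U'(a)$), which produces the $-i\pi\,U''(a)\tp(a)^2/|U'(a)|$ term in \eqref{lambda_fact}; the real part assembles into a principal-value integral $C$. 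I expect the rest—the contraction estimate, the continuity of $G$, the final implicit function theorem step—to be routine once the correct functional-analytic setup (orthogonal splitting, uniform bound on the multiplication operator, dominated convergence) is in place, so the write-up should be organized to isolate the Plemelj--Sochocki limit as a standalone lemma and treat everything else as soft perturbation theory.
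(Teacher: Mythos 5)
Your overall architecture (Lyapunov--Schmidt splitting, a scalar bifurcation equation $G(c,\varepsilon)=0$, and the Plemelj--Sochocki formula producing $\im\lambda\neq0$) is the same in spirit as the paper's, but the step you dismiss as routine is precisely where the proposal breaks. The claim that $\phi\mapsto \frac{U''}{U-c}\phi$ is bounded on $L^2$ \emph{uniformly} in small $c$ with $\im c\ge 0$ is false: the norm of a multiplication operator on $L^2$ is the $L^\infty$ norm of the multiplier, and near the point $a'$ with $U(a')=c_R$ one has $|U''(y)|\sim|y-a|$ while $|U(y)-c|\sim\max(|y-a'|,c_I)$, so $\sup_y\bigl|\tfrac{U''}{U-c}\bigr|\gtrsim \frac{|a-a'|}{c_I}\sim\frac{|c_R|}{c_I}$, which blows up along, e.g., $c=t+it^2$. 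The same computation shows the multiplier actually appearing in the range equation, $\frac{U''}{U}-\frac{U''}{U-c}=\frac{-cU''}{U(U-c)}$, has $L^\infty$ norm of order $|c|/c_I$, not $O(|c|)$. Hence your contraction argument and the claimed bound $\|\psi\|_{H^1}\lec|c|+\varepsilon$ do not follow from a pointwise multiplication estimate; uniformity without the cone restriction $c_I\gtrsim|c_R|$ is exactly the hard point. The paper circumvents it by never estimating the multiplier in $L^\infty$: it bounds $K\bigl((U-c)^{-1}g\bigr)$ with $g=cU''\phi/U$, using the smoothing of $K=(-\partial_y^2+\ta^2)^{-1}$ together with the uniform logarithmic bound on the Fourier sine coefficients of $(U-c)^{-1}$ (Lemmas~\ref{L02}--\ref{L04} and Plancherel). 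Restricting $c$ a priori to a cone (as Lin does) would rescue a version of your estimate, but that is a materially different and more delicate argument than the one you wrote, and your later steps use the uniform bound on all of $\{\im c\ge0\}$.

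There is a second gap at the end: the implicit function theorem at $(0,0)$ requires $G$ to be $C^1$ in $c$ up to the boundary $\im c=0$, which you do not establish and which is genuinely delicate, since $\partial_c\frac{1}{U-c}=\frac{1}{(U-c)^2}$ is far more singular (this is where Lin needs the cone condition and careful estimates of $\partial_cI$, $\partial_\varepsilon I$); the parenthetical ``valid even though we only have one-sided $c$'' is not a substitute for an argument. The paper needs only the expansion $G(c,\varepsilon)=c\lambda+\varepsilon+o(|c|+\varepsilon)$ together with holomorphy of $G$ in the open upper half-plane, and then applies Rouch\'e's theorem on the disc centred at $-\varepsilon/\lambda$ of radius $\varepsilon/2|\lambda|$, which lies at distance $\sim\varepsilon$ from the real axis, so no boundary regularity is ever used; you should replace your IFT step by such a Rouch\'e (or degree) argument. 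Two minor points: your operator $L=-\partial_y^2+\alpha^2-\ta^2$ does not have $\tp$ in its kernel --- it should be $-\partial_y^2+\ta^2+U''/U$ (your splitting into $\mathrm{Span}\{\tp\}\oplus\tp^\perp$ is then fine and is an acceptable variant of the paper's device of adding $P\psi$ and imposing $P\psi=0$); and the sign of $\im\lambda$ must be tracked so that $-\varepsilon/\lambda$ lands in the upper half-plane (cf.\ Lemma~\ref{L06} and \eqref{tildec}), though the mechanism you describe --- Plemelj--Sochocki producing an imaginary part proportional to $h(a)/|U'(a)|$ --- is indeed the paper's.
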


The proof is inspired by a recent work of Vishik \cite{vishik_1,vishik_2}  on nonuniqueness of the forced $2$D Euler equations, and its review paper \cite{ABCD}. This includes analysis of the Rayleigh equation arising from $2$D vortices. To be precise, Chapter~4 of the review paper \cite{ABCD} involves analysis of the Rayleigh equation \eqref{rayleigh}, posed on $\R$, in $L^2$ and H\"older spaces $C^\gamma$, $\gamma \in (0,1)$.  In Theorem~\ref{T01} we consider instead the Sobolev space $H^1_0$, which simplifies the analysis.

In order to expose the main idea, we now briefly describe the strategy of the proof of Theorem~\ref{T01}, which is inspired by \cite[Section~4.7]{ABCD}. 

We first observe that we expect a solution $\phi$ to \eqref{rayleigh} remain close to $\tp$ as $c,\varepsilon\to 0$, where we assume that $\| \tp \|=1$. We thus write
\[
\phi = \tp + \psi,
\]
where we expect that $\int \psi \tp =0$. We define the projection $P \colon L^2 \to L^2$ onto $\mathrm{Span}\{ \tp \}$, 
\[
P \phi \coloneqq (\phi ,\tp) \tp.
\]
We note that the Rayleigh equation \eqref{rayleigh} can then be decomposed into the   \emph{projected equation}
\eqnb\label{proj}
-\psi'' + \ta^2 \psi + \frac{U''}{U} \psi + P \psi  = \varepsilon \phi  + \left( \frac{U''}U - \frac{U''}{U-c} \right) \phi 
\eqne
coupled with the reduced equation
\eqnb\label{reduced}
P\psi =0.
\eqne
This way, we reduce \eqref{rayleigh} to first finding solution $\phi_{\varepsilon ,c}$ to the infinitely-dimensional system \eqref{proj} for every sufficiently small $\varepsilon ,c$, and then we use the reduced, one-dimensional problem \eqref{reduced} to determine the relation $c=c(\varepsilon )$.  In particular  \eqref{reduced} can be written as 
\[
G(\varepsilon , c ) =0, \qquad \text{ where }\,\,\, G(\varepsilon , c )  \coloneqq \int \psi \tp .
\]
Similarly to \cite[Lemma~4.8.1]{ABCD}, we expand $G$ into a local expansion at $0$ to obtain
\[
G(\varepsilon , c ) = c\lambda + \varepsilon + o(\varepsilon + |c|)
\]
as $\varepsilon ,c\to 0$, $\im\, c>0$.
In order to obtain instability we must have $\im \,\lambda >0$ (see \eqref{tildec}),  which can be obtained by the Plemelj-Sochocki formula \eqref{plemelj_formula} as the leading order term of $(U-c)^{-1}$ appearing in \eqref{rayleigh} (see Lemma~\ref{L02}). 
Assuming that $\im\, \lambda >0$ the reduced equation \eqref{reduced} can be solved using a simple application of Rouch\'e's Theorem, see the argument below Lemma~\ref{L06}. \\

Although this argument resembles the proof in \cite[Chapter~4]{ABCD} we note that it is  simpler and shorter. For instance, we do not need any estimates in H\"older spaces. Moreover, our proof of solvability of the projected equation \eqref{proj} follows from an argument based on summability of the Neumann series in $H^1_0$ (see \eqref{neumann} below), avoiding any use of the Green's function or the shooting method for solving $2$nd order ODEs. This is possible due to a direct estimate on the decay of the Fourier Sine coefficients of $(U-c)^{-1}$ (see Lemma~\ref{L03}), and the  Plancherel Theorem. As a byproduct, we do not require the cone condition, i.e. the restriction of $c$ to a cone in the upper half-plane of $\C$ with the tip on the real axis, which is crucial in the estimates of \cite{ABCD}, as well as in the original proof of Lin~\cite{lin_siam} in studying \eqref{ders_I} in preparation for the fixed point argument. Moreover, our proof exposes the role of the Plemelj-Sochocki formula \eqref{plemelj_formula}\footnote{This is achieved by Lemma~\ref{L02}, which shows how the key expression $(U-c)^{-1}$ in the Rayleigh equation \eqref{rayleigh} can be approximated, uniformly in $y\in (-1,1)$ by an expression reminiscent of the left-hand side of \eqref{plemelj_formula}, from where an application of \eqref{plemelj_formula} (in~\eqref{limit_of_gamma}) makes clear that $\im\,\lambda <0$, which is responsible for the emergence of the instability, as discussed in \eqref{c_expansion} above.}, which  is central to the emergence of the instability.  \\

Furthermore, the Sobolev setting of our approach can be extended to a version that is localized in space and captures an instability occurring in a neighbourhood of a ``strong'' and localized inflection point of a velocity profile $U(y)$.  This is the subject of our second theorem.

 In order to state it, we suppose that $U(y) = U_0 (ky)$ for some $U_0$ satisfying assumptions listed around \eqref{Kgeq0}  and which gives rise to a neutral mode instability namely there exists a positive eigenvalue $\ta^2$ of \eqref{eq_phi_basic} with $c=0$. We also assume that $U_0(y)=1$ for $y<-2$ and $U_0 (y)=-1$ for $y>2$. We note that, since $U''/U= k^2 U_0''/U_0$ one can observe that $\ta/ k \to \alpha_0$ as $k\to \infty$, where $\alpha_0^2$ is the maximal eigenvalue of \eqref{eq_phi_basic} (with $c=0$) posed on $\R$ (see Lemma~\ref{L_app}), with a single eigenfunction $\Phi_0 \in L^2 (\R)$, i.e.
\eqnb\label{Phi0}
-\Phi_0''+ \alpha_0^2 \Phi_0 
+ \frac{U_0''}{U_0} \Phi_0 =0\quad \text{ in } \R.
\eqne
 For convenience we assume that 
\[
\| \Phi_0 \|_{H^1} =1.
\]

Thus, as $k\to \infty$, in which case $U$ approximates the vortex sheet with constant vorticity at $y=0$, one expects the unstable solution to \eqref{rayleigh} to be approximately equal to $\tp (ky)$, and our second theorem makes this observation precise. 
\begin{figure}[h]
\centering
\includegraphics[width=15cm]{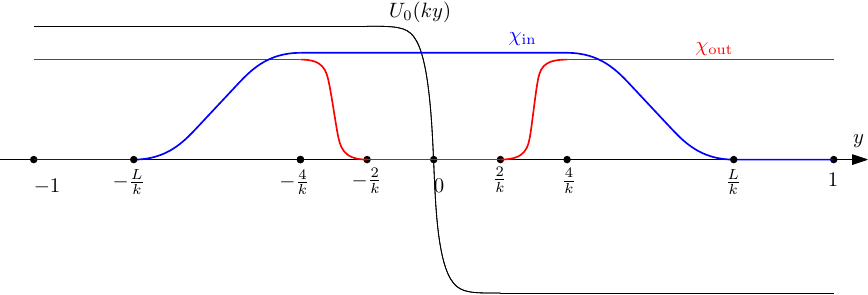}
  \captionof{figure}{A sketch of the velocity profile $U(y)=U_0 (ky) $ and the cutoffs $\chi_{\ii}$, $\chi_{\out}$ leading to the inner and outer estimates \eqref{multisc_ests}.}\label{fig_sketch} 
\end{figure}
In order to introduce it, we let $L>1$ be a large number (to be specified) and we define the inner and outer cutoffs $\chi_{\ii }$, $\chi_{\out}$  to be smooth functions such that $\chi_{\ii} = 1$ on $(-4/k,4/k)$, $\chi_{\ii}=0$ outside $(-L/k,L/k)$, $\chi_{\out} =0$ in $(-2/k,2/k)$ and $\chi_{\out}=1$ outside $(-4/k,4/k)$ and
\eqnb\label{cutoffs}
\chi_{\ii}^{(l)} \leq \left( \frac{10k}L\right)^l, \qquad \chi_{\out}^{(l)} \leq \left( {10k} \right)^l \qquad  \text{ for } l=0,1,2, 
\eqne
Note that $\chi_{\ii}$ and $\chi_{\out}$ overlap, see Figure~\ref{fig_sketch}. 
We also define the \emph{inner function space}
\eqnb\label{def_of_Y}
Y \coloneqq H^1 (\R ),
\eqne
and the \emph{outer function space}
\eqnb\label{def_of_Z}
Z \coloneqq H^1_0 ((-1,1)) ,\qquad \| f \|_Z \coloneqq \sqrt{\frac{L}k}  \| f' \|_{L^2} + \sqrt{kL}  \| f \|_{L^2}.
\eqne

\begin{thm}\label{T02}
There exists $L>1$ and $\varepsilon_0,c_0>0$ with the following property: for all sufficiently large $k$ and all  $\varepsilon \in (0,\varepsilon_0 k^2)$ there exist $c=c(\varepsilon,k)\in \C$ with $\im\, c\sim \varepsilon/k^2>0$ and such that the Rayleigh equation \eqref{rayleigh} has a nontrivial solution $\phi\in H^1_0 (-1,1)$, which admits the  decomposition  
\[
\phi (y) = \phi_{\out }(y) \chi_{\out}(y) + \left( \Phi_0 (ky) + \Psi (ky)  \right) \chi_{\ii }(y),
\]
where $\Psi \in H^1$, $\phi_{\out} \in Z$ satisfy the estimates
\eqnb\label{multisc_ests}
\| \Psi \|_{H^1} + \| \phi_{\out} \|_Z \lec \varepsilon + \left| \frac{\ta^2}{k^2} - \alpha_0^2 \right|  + |c| +L^{-1/2}.
\eqne
\end{thm}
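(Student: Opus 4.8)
The plan is to carry out a two-scale Lyapunov--Schmidt reduction: build an inner solution at scale $1/k$ exactly as in the proof of Theorem~\ref{T01}, and glue it to an essentially explicit outer solution via the cutoffs \eqref{cutoffs}. Writing $\Theta(y):=\Phi_0(ky)+\Psi(ky)$ and $\mathcal R[\phi]:=-\phi''+(\ta^2-\varepsilon)\phi+\tfrac{U''}{U-c}\phi$, substituting $\phi=\phi_{\out}\chi_{\out}+\Theta\chi_{\ii}$ gives
\[
\mathcal R[\phi]=\chi_{\out}\mathcal R[\phi_{\out}]+\chi_{\ii}\mathcal R[\Theta]-\big(\phi_{\out}\chi_{\out}''+2\phi_{\out}'\chi_{\out}'\big)-\big(\Theta\chi_{\ii}''+2\Theta'\chi_{\ii}'\big).
\]
Two facts simplify the blocks: since $U_0\equiv\pm1$ for $|z|>2$ we have $U''\equiv0$ on $\supp\chi_{\out}$, so $\chi_{\out}\mathcal R[\phi_{\out}]=\chi_{\out}\big(-\phi_{\out}''+(\ta^2-\varepsilon)\phi_{\out}\big)$; and \eqref{Kgeq0} forces $U_0''(0)=0$, so $U_0''/U_0$ is bounded and compactly supported and, rescaling $z=ky$, the block $\chi_{\ii}\mathcal R[\Theta]$ is a small perturbation of the profile operator of \eqref{Phi0}. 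I would therefore \emph{prescribe} the outer equation $-\phi_{\out}''+(\ta^2-\varepsilon)\phi_{\out}=\Theta\chi_{\ii}''+2\Theta'\chi_{\ii}'$ on $(-1,1)$ with $\phi_{\out}(\pm1)=0$ (right-hand side supported where $\chi_{\ii}'\neq0$, i.e.\ in $\{4/k<|y|<L/k\}$), together with the inner equation, in the variable $z$,
\[
-\Psi''+\alpha_0^2\Psi+\tfrac{U_0''}{U_0}\Psi+P_0\Psi =-\Big(\tfrac{\ta^2-\varepsilon}{k^2}-\alpha_0^2\Big)(\Phi_0+\Psi)-\Big(\tfrac{U_0''}{U_0-c}-\tfrac{U_0''}{U_0}\Big)(\Phi_0+\Psi)+\tfrac1{k^2}\big(\phi_{\out}\chi_{\out}''+2\phi_{\out}'\chi_{\out}'\big)(z/k),
\]
where $P_0$ is the $L^2$-projection onto $\Phi_0$ and the last term is supported in $\{2<|z|<4\}$. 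A region-by-region check of the four remainder terms above shows that, once the scalar reduced equation $G(\varepsilon,c,k):=(\Psi,\Phi_0)_{L^2}=0$ holds (so that the auxiliary $P_0\Psi$ disappears), $\phi$ solves \eqref{rayleigh} with $\phi(\pm1)=0$ \emph{exactly}. Throughout one fixes $L$ large first and takes $k$ large afterwards, keeps $\varepsilon/k^2$ and $|c|$ small, and uses $\ta^2/k^2\to\alpha_0^2$ (Lemma~\ref{L_app}).

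For the inner estimate, the operator $\mathcal L_0:=-\partial_z^2+\alpha_0^2+\tfrac{U_0''}{U_0}+P_0$ is boundedly invertible on $Y=H^1(\R)$: $-\alpha_0^2$ is the isolated, simple bottom eigenvalue of the Schr\"odinger operator $-\partial_z^2+U_0''/U_0$ (a bounded, exponentially decaying potential) with positive exponentially decaying ground state $\Phi_0$, so the spectral gap of $\mathcal L_0$ is bounded below uniformly in $k$. Inverting $\mathcal L_0$ and summing the Neumann series exactly as in Theorem~\ref{T01} — using the decay, uniform as $c\to0$, of the Fourier data of $(U_0-c)^{-1}$ (the real-line analogue of Lemma~\ref{L03}) to absorb $\big(\tfrac{U_0''}{U_0-c}-\tfrac{U_0''}{U_0}\big)\Psi$ — gives
\[
\|\Psi\|_{H^1(\R)}\lec \tfrac\varepsilon{k^2}+\Big|\tfrac{\ta^2}{k^2}-\alpha_0^2\Big|+|c|+\tfrac1{k^2}\big\|(\phi_{\out}\chi_{\out}''+2\phi_{\out}'\chi_{\out}')(\cdot/k)\big\|_{L^2(\R)}\,,
\]
and, since $|\chi_{\out}^{(l)}|\lec(10k)^l$ by \eqref{cutoffs} with $\supp\chi_{\out}''\subset\{|y|\lec1/k\}$, the change of variables and the weights of \eqref{def_of_Z} bound the last term by $\lec L^{-1/2}\|\phi_{\out}\|_Z$.

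For the outer estimate, $\ta^2-\varepsilon\sim k^2\alpha_0^2$ (Lemma~\ref{L_app}, with $\varepsilon_0\ll\alpha_0^2$), so testing the outer equation against $\phi_{\out}$ yields $\|\phi_{\out}'\|_{L^2}^2+k^2\|\phi_{\out}\|_{L^2}^2\lec|(F_{\out},\phi_{\out})|$; since $\|\phi_{\out}\|_Z^2\sim\tfrac Lk\big(\|\phi_{\out}'\|_{L^2}^2+k^2\|\phi_{\out}\|_{L^2}^2\big)$ by \eqref{def_of_Z}, this reads $\|\phi_{\out}\|_Z\lec\tfrac Lk\|F_{\out}\|_{Z^\ast}$ (dual with respect to the $L^2$-pairing) — the $Z$-weights being chosen precisely so this coercive inversion loses nothing. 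Because $\Phi_0$ decays exponentially and $|\chi_{\ii}^{(l)}|\lec(10k/L)^l$, writing $F_{\out}=\Theta\chi_{\ii}''+2\Theta'\chi_{\ii}'$ and rescaling gives $\|F_{\out}\|_{Z^\ast}\lec\tfrac k{L^{3/2}}(1+\|\Psi\|_{H^1})$, hence $\|\phi_{\out}\|_Z\lec L^{-1/2}(1+\|\Psi\|_{H^1})$. Feeding this into the inner estimate and fixing $L$ so large that the $L^{-1/2}$ prefactors are $\le\tfrac12$, the two bounds close into \eqref{multisc_ests}, and the same estimates make the coupled map $(\Psi,\phi_{\out})\mapsto(\Psi,\phi_{\out})$ a contraction on $Y\times Z$ for all large $k$, all $\varepsilon\in(0,\varepsilon_0k^2)$ and all $|c|<c_0$, producing $\Psi(\varepsilon,c,k)$, $\phi_{\out}(\varepsilon,c,k)$.

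It then remains to solve $G(\varepsilon,c,k)=(\Psi(\varepsilon,c,k),\Phi_0)_{L^2}=0$, the analogue of \eqref{reduced}. Expanding $G$ as in Theorem~\ref{T01}, its leading part is
\[
G(\varepsilon,c,k)=\lambda_k c+\tfrac\varepsilon{k^2}-\Big(\tfrac{\ta^2}{k^2}-\alpha_0^2\Big)+o\big(|c|+\tfrac\varepsilon{k^2}\big),
\]
where, by the Plemelj--Sochocki formula \eqref{plemelj_formula} applied at the critical point $z=0$ of $U_0$, $\lambda_k\to\lambda_0$ as $k\to\infty$ with $\im\lambda_0>0$, so $\im\lambda_k$ stays bounded away from $0$; also $\ta^2/k^2-\alpha_0^2\le0$ is negligible (indeed exponentially small in $k$, since the $k$-problem differs from \eqref{Phi0} only through the far-field Dirichlet conditions). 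Rouch\'e's theorem — as in the argument below Lemma~\ref{L06} — then produces a root $c=c(\varepsilon,k)$ in the upper half-plane, with $\im c\sim\varepsilon/k^2$. The hard part is precisely this bookkeeping: one must verify that each coupling term — $k^{-2}(\phi_{\out}\chi_{\out}''+2\phi_{\out}'\chi_{\out}')$ feeding the inner problem, $\Theta\chi_{\ii}''+2\Theta'\chi_{\ii}'$ feeding the outer one, and their contributions to $G$ — is controlled, in exactly the (dual) norm dictated by $Z$, by $L^{-1/2}$ times the unknowns, so that the interplay of the two cutoff-derivative bounds $(10k/L)^l$ versus $(10k)^l$ in \eqref{cutoffs}, the factor $k^{-2}$ gained on rescaling the inner equation, and the exponential decay of $\Phi_0$ conspire to let ``$L$ large, then $k$ large'' close the coupled estimate.
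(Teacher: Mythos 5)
Your proposal follows essentially the same route as the paper's proof: the same cutoff decomposition into coupled inner and outer equations (using $U''\equiv 0$ on $\supp\chi_{\out}$), inversion of the projected inner operator and a contraction/Neumann-series argument on $H^1\times Z$ in which the coupling operators are $O(L^{-1/2})$, followed by the reduced equation $(\Psi,\Phi_0)=0$ solved via the expansion of $G$ and Rouch\'e's theorem as in Theorem~\ref{T01}. The only differences are cosmetic (a dual-norm energy estimate for the outer problem instead of the paper's direct $L^2$ bound defining $\mathsf{C}$, and some added, unneeded claims such as exponential smallness of $\ta^2/k^2-\alpha_0^2$), so the argument matches the paper's.
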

We present the proof of  Theorem~\ref{T02} in Section~\ref{sec_thm2} below. The proof, including the gluing procedure described above and some estimates of the inner and outer equations, is inspired by the recent paper \cite{AO}, which develops  an inner-outer gluing approach for capturing instabilities of the 3D inviscid vortex columns. We also note a related result  \cite{ABC2} on gluing nonunique solutions to the forced Navier-Stokes equations.\\

We note that the claim of Theorem~\ref{T02} admits certain rescaling with respect to $k$. The rescaling is, however, not complete for the domain we consider (i.e. that $y\in (-1,1)$), and so, in order to explain it, let us suppose for the moment that $y\in \R$. In that case it suffices to prove the claim for $k=1$. Indeed, if $(\phi , \alpha , c, \varepsilon )$ is a solution to the Rayleigh equation \eqref{rayleigh} with a shear flow $U$, then $\Upsilon (y ) \coloneqq \phi (k y)$, $a\coloneqq k \alpha $, $c$, $\epsilon \coloneqq k^2 \varepsilon$ solves the Rayleigh equation with shear flow $\mathcal{U}(y) \coloneqq U(k y)$. This shows that the wave speed $c$ remains the same at each scale, while the growth rate $\alpha c$ (recall~\eqref{stream_fcn}) increases as $k\to \infty$. In fact, for a shear flow with characteristic length $1/k$, the growth rate is of the order $k$, which is consistent with the known asymptotics, see for example the remarks at the bottom of Section~3.2 in Grenier \cite{grenier}. This also provides a link between Theorem~\ref{T02} and the Kelvin-Helmholtz instability \cite{drazin_reid,acheson,majda_bertozzi} of vortex sheets. For example, it is well-known in the context of the Kelvin-Helmholtz instability that considering  independent modes of oscillations of a form similar to \eqref{stream_fcn}, the growth rate in time is predicted to be of the size proportional to  $\alpha$, representing mode frequency in $x$ (see \cite[(4.27)]{drazin_reid}, for example). This is consistent with the scaling of Theorem~\ref{T02}, since the growth rate in time is $\alpha\im\,c$ and  the maximal value of $\im\,c$ is proportional to $\varepsilon_0 =O(1)$.  
We also recall the results of Grenier, who proved (in~\cite[Corollary~1.2]{grenier}) nonlinear instability of shear flows of a similar form as those considered in Theorem~\ref{T02}. The nonlinear instability arises from linear instability, and Grenier indicates in \cite[Section~4.3]{grenier} a family of piecewise affine shear profiles which are linearly unstable (together with their regularizations by mollification).  \\

We note that we do not claim any convergence result as $k\to \infty$. In fact, the relation of this limit with the problem of instabilities of vortex sheets remains an interesting open problem.  

\section{Proof of Theorem~\ref{T01}}\label{sec:pf:T1}
 Here we prove Theorem~\ref{T01}.  We first let $K \colon L^2 \to H^1_0\cap H^2$ denote the solution operator of 
\[
-\psi''+\ta^2 \psi = f
\]
with homogeneous Dirichlet boundary conditions at $y=\pm 1$. We note that $K$ is a compact operator from $L^2$ to $H^s$ for any $s\in [0,2)$. Taking $K$ of \eqref{proj} we obtain
\eqnb\label{proj1}
 \psi+K \left(  \frac{U''}{U} \psi+P \psi \right) = K \left( \varepsilon \phi  + \left( \frac{U''}U - \frac{U''}{U-c} \right) \phi  \right) 
\eqne
Denoting the left-hand side by $T\psi$ and the right-hand side by $R_{\varepsilon ,c}$ we obtain
\eqnb\label{proj2}
T \psi= R_{\varepsilon, c }\psi+R_{\varepsilon, c }\tp.
\eqne
In order to solve this equation for all small $\varepsilon,c>0$, we first show some properties of $T$ and $R_{\varepsilon ,c}$.
\begin{lem}\label{L_T}
$T\colon H^1_0 \to H^1_0$ is invertible.
\end{lem}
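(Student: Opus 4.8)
The plan is to show that $T = I + S$ where $S\psi := K\left(\frac{U''}{U}\psi + P\psi\right)$ is a compact operator on $H^1_0$, and then to verify injectivity of $T$, so that the Fredholm alternative delivers invertibility. First I would argue compactness: $K$ is compact from $L^2$ to $H^s$ for every $s \in [0,2)$, in particular from $L^2$ to $H^1_0$, by the hypothesis recalled just above the lemma. The map $\psi \mapsto \frac{U''}{U}\psi$ sends $H^1_0$ boundedly into $L^2$ (indeed into $H^1$), since $U \in C^5$ and, by Assumption A together with \eqref{Kgeq0}, the ratio $U''/U$ extends to a bounded $C^3$ function on $[-1,1]$ — near the single zero $a$ of $U$, de l'H\^opital and $U'(a) \neq 0$ show $U''/U \to U'''(a)/U'(a)$. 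Likewise $P$ is a bounded rank-one map $H^1_0 \to L^2$. Composing a bounded map into $L^2$ with the compact map $K\colon L^2 \to H^1_0$ gives that $S\colon H^1_0 \to H^1_0$ is compact, hence $T = I + S$ is Fredholm of index zero.

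It then remains to prove that $T$ is injective on $H^1_0$, since for a Fredholm operator of index zero injectivity is equivalent to surjectivity, hence to invertibility. Suppose $T\psi = 0$, i.e. $\psi + K\left(\frac{U''}{U}\psi + P\psi\right) = 0$. Applying $-\partial_y^2 + \ta^2$ (which is the inverse of $K$ on the appropriate space, and $\psi \in H^2 \cap H^1_0$ by elliptic regularity since the right-hand side lies in $L^2$) yields the identity $-\psi'' + \ta^2\psi + \frac{U''}{U}\psi + (\psi,\tp)\tp = 0$. I would now test this equation against $\tp$ and integrate by parts: using that $\tp$ solves $-\tp'' + \ta^2\tp + \frac{U''}{U}\tp = 0$ with $\tp \in H^1_0$, the first three terms pair with $\psi$ to give zero (move the derivatives onto $\tp$), leaving $(\psi,\tp)\,\|\tp\|^2 = 0$, hence $(\psi,\tp) = 0$. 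Feeding this back, $\psi$ satisfies $-\psi'' + \ta^2\psi + \frac{U''}{U}\psi = 0$ with homogeneous Dirichlet conditions; but $\ta^2$ is, by construction, the \emph{maximal} (and simple) eigenvalue of the Sturm–Liouville problem \eqref{eq_phi_basic}–\eqref{BCs_phi} at $c=0$, with one-dimensional eigenspace spanned by $\tp$. Therefore $\psi \in \mathrm{Span}\{\tp\}$, and combined with $(\psi,\tp) = 0$ this forces $\psi = 0$. Thus $\ker T = \{0\}$.

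The main obstacle I anticipate is purely bookkeeping: confirming that $\frac{U''}{U}$ is genuinely a bounded (smooth enough) multiplier, which requires the removable-singularity argument at $y = a$ using $U'(a) \neq 0$ — this is where Assumption A and $U \in C^5$ are used — and confirming that $K$ maps into $H^1_0$ (not merely $H^1$), which is immediate from the homogeneous Dirichlet boundary conditions built into the definition of $K$. Everything else is a direct application of the Fredholm alternative plus the simplicity of the top Sturm–Liouville eigenvalue. I would close by remarking that, quantitatively, $\|T^{-1}\|$ is finite and independent of $\varepsilon, c$, which is what is needed for the Neumann-series argument \eqref{neumann} referenced later.
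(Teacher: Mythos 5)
Your proposal is correct and follows essentially the same route as the paper: compactness of $\psi\mapsto K\bigl(\tfrac{U''}{U}\psi+P\psi\bigr)$ gives that $T$ is Fredholm of index zero, and triviality of $\ker T$ is obtained by testing $L\psi+P\psi=0$ against $\tp$ to get $(\psi,\tp)=0$ and then invoking simplicity of the top Sturm--Liouville eigenvalue. Your write-up merely spells out the final step (and the boundedness of the multiplier $U''/U$) in more detail than the paper does.
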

\begin{proof}
We first note that, since $\phi \mapsto K \left( \frac{U''}{U} \phi + P \phi \right)$ is a compact operator $H^1_0\to H^1_0$, we have that $T$ is Fredholm of index $0$, and so it suffices to show that $\mathrm{ker}\,T=0$. To this end, suppose that $T\phi =0$, that is
\eqnb\label{EQ01}
L\phi + P \phi \coloneqq -\phi'' + \ta^2 \phi + \frac{U''}{U} \phi + P \phi =0
\eqne
for some $\phi \in H^1_0$. Note that $L$ is self-adjoint, and that $L\phi =0$ if and only if $\phi \in \mathrm{Span}\,\{ \tp \}$. Multiplying \eqref{EQ01} by $\tp$ and integrating, we obtain
\[
 (P \phi,\tp )  =- (L \phi, \tp )  =- (\phi, L\tp  )=0, 
\]
and so $P \phi \perp \tp$. Thus $\phi =0$, as required.
\end{proof}

As for $R_{\varepsilon ,c}$ we first consider the term $1/(U-c)$. We note that, since $a\in (-1,1)$ is an inflection point of $U$, we have $U'(a)\ne 0$. Hence, for every sufficiently small $|c|$, there exists $a'\in (-1,1)$ such that $U(a') = c_R$, where $c=c_R + i c_I$. In the next lemma we show that $1/(U-c)$ can be approximated by a term resembling the Plemelj-Sochocki formula \eqref{plemelj_formula}.

\begin{lem}[Approximation lemma]\label{L02} For sufficiently small $|c|$ we have that
\eqnb\label{approx_sing}
\frac{1}{U(y)-c } - \frac{1}{U'(a') (y-a') - ic_I} = O(1)+ i o(1)
\eqne
uniformly for $y\in (-1,1)$, as $|c|\to 0$.
\end{lem}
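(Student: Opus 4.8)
The plan is to pin down the relevant root $a'$ and then to estimate the difference in \eqref{approx_sing} separately near and away from it. Since $U'(a)\neq0$, for all small $|c|$ there is a unique point $a'$ with $U(a')=c_R$ in a fixed small neighbourhood of $a$, and $a'\to a$ as $|c|\to0$; this is the $a'$ we take. Fix a small $\delta>0$ and split $(-1,1)$ into the \emph{inner} part $\{|y-a'|<\delta\}$, which contains the singularity, and the \emph{outer} part $\{|y-a'|\geq\delta\}$.

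On the outer part the bound is elementary. Because $a$ is the \emph{only} zero of $U$, the modulus $|U|$ is bounded below on $\{|y-a|\geq\delta/2\}$ by some $m>0$; for $|c|$ small $a'$ lies within $\delta/2$ of $a$, so $|U(y)-c_R|\geq m/2$ whenever $|y-a'|\geq\delta$, and likewise $|U'(a')(y-a')-ic_I|\geq|U'(a')|\delta\gtrsim\delta$. Hence both fractions in \eqref{approx_sing} are $O(1)$ there, while $|\im(U(y)-c)^{-1}|\leq 4|c_I|/m^2$ and $|\im(U'(a')(y-a')-ic_I)^{-1}|\lesssim|c_I|/\delta^2$, so the imaginary part of the difference is $o(1)$, uniformly on the outer part.

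The inner part is where the real work is. Writing $s\coloneqq y-a'$ and $A\coloneqq U'(a')$, Taylor's theorem (with $U\in C^5$ this is more than enough) gives $U(y)-c_R=As+\rho(s)$ with $\rho(s)=\tfrac12U''(a')s^2+O(s^3)$, the $O(s^3)$ uniform in $a'$; here $|A|$ stays in a fixed compact subinterval of $(0,\infty)$, and, shrinking $\delta$, $|\rho(s)|\leq\tfrac12|A||s|$, so $|U(y)-c_R|\geq\tfrac12|A||s|$. Subtracting the two fractions gives
\[
\frac{1}{U(y)-c}-\frac{1}{As-ic_I}=\frac{-\rho(s)}{(As+\rho(s)-ic_I)(As-ic_I)}.
\]
For the real part, use $\re(t-ic_I)^{-1}=h(t)$ with $h(t)\coloneqq t/(t^2+c_I^2)$ and $|h'(t)|\leq(t^2+c_I^2)^{-1}$: the real part equals $h(As+\rho(s))-h(As)$, and since the segment joining $As$ and $As+\rho(s)$ stays at distance $\gtrsim|A||s|$ from $0$, the mean value theorem bounds it by $|\rho(s)|/(A^2s^2)\lesssim1$, i.e. $O(1)$. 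For the imaginary part, subtracting $\im(U-c)^{-1}=c_I/((U-c_R)^2+c_I^2)$ and $\im(As-ic_I)^{-1}=c_I/(A^2s^2+c_I^2)$ yields
\[
\frac{-c_I\,\mu(s)}{\bigl((As+\rho(s))^2+c_I^2\bigr)\bigl(A^2s^2+c_I^2\bigr)},\qquad\mu(s)\coloneqq(U(y)-U(a'))^2-A^2s^2 .
\]
Expanding, $|\mu(s)|\lesssim|U''(a')|\,|s|^3+|s|^4$, and crucially $U''(a')\to U''(a)=0$ as $|c|\to0$ because $a$ is an inflection point. The $|s|^4$ term contributes $\lesssim|c_I|\,|s|^2\big/\bigl(A^2s^2(A^2s^2+c_I^2)\bigr)\lesssim|c_I|=o(1)$; for the $|U''(a')|\,|s|^3$ term, the quantity $|c_I||s|^3\big/\bigl((\tfrac14A^2s^2+c_I^2)(A^2s^2+c_I^2)\bigr)$ is homogeneous of degree $0$ under $(s,c_I)\mapsto(\lambda s,\lambda c_I)$, hence a bounded function of $s/c_I$ (with bound uniform over the compact range of $A$), so this term is $\lesssim|U''(a')|=o(1)$. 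Combined with the outer estimate, this gives $O(1)$ real part and $o(1)$ imaginary part, uniformly in $y$.

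The main obstacle is not any single inequality but extracting a genuine $o(1)$—not merely $O(1)$—from the imaginary part near the diagonal: the naive bound on the difference of the two approximate Dirac masses $\im(U-c)^{-1}$ and $\im(As-ic_I)^{-1}$ is only $O(1)$, and the decay is recovered solely because the mismatch coefficient $U''(a')$ vanishes in the limit, which in turn hinges on the selected root $a'$ converging to the inflection point $a$. Everything else—the Taylor‑remainder bookkeeping and the outer estimate—is routine.
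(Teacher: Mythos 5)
Your argument is correct and, at bottom, it follows the same strategy as the paper's proof: you take the same root $a'$ of $U=c_R$, Taylor-expand $U$ about $a'$, and the $o(1)$ imaginary part rests on exactly the fact the paper exploits, namely that $U''(a)=0$ (forced by \eqref{Kgeq0}), so that the quadratic coefficient degenerates at the singular point. Where you differ is the bookkeeping near the singularity: the paper sticks with the single Lagrange-remainder identity behind \eqref{conv_to0} and shrinks the window, choosing $\eta$ with $\|U''\|_{L^\infty(a-\eta,a+\eta)}\le \varepsilon/(4|U'(a)|^2)$ so that the \emph{whole modulus} of the difference is $\le\varepsilon$ for $|y-a|\le\eta/2$, and then for $|y-a|>\eta/2$ it computes the imaginary part explicitly and bounds it by $\lesssim_\eta|c_I|$; you instead fix the inner window once and for all and split the imaginary part as a difference of Poisson-type kernels with numerator $\mu=2As\rho+\rho^2$, killing the cubic piece with $|U''(a')|\to0$ (via the degree-zero homogeneity in $(s,c_I)$) and the quartic piece with $|c_I|$. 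Your variant buys an explicit rate, $o(1)\lesssim |c_I|+|U''(a')|$, in place of the paper's qualitative $\varepsilon$--$\eta$ choice, at the cost of slightly heavier algebra, and your closing remark is accurate: without $U''(a)=0$ the imaginary-part mismatch at $|s|\sim|c_I|$ is genuinely only $O(1)$, so the inflection-point hypothesis is what produces the $o(1)$. One typographical slip to fix: the $|s|^4$ contribution should be bounded by $|c_I||s|^4/\bigl((\tfrac14A^2s^2+c_I^2)(A^2s^2+c_I^2)\bigr)\le 4|c_I||s|^2/\bigl(A^2(A^2s^2+c_I^2)\bigr)\le 4|c_I|/A^4$; as written, with numerator $|c_I||s|^2$ over $A^2s^2(A^2s^2+c_I^2)$, the quantity is not $\lesssim|c_I|$, though the intended cancellation is clear and the conclusion stands.
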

\begin{proof}
We note that
\[
\frac{1}{U(y)-c } - \frac{1}{U'(a') (y-a') - ic_I} = \frac{U''(\xi ) (y-a')^2/2}{(U(y)-c)(U'(a')(y-a')-ic_I)},
\]
and so
\eqnb\label{conv_to0}
\left| \frac{1}{U(y)-c } - \frac{1}{U'(a') (y-a') - ic_I} \right| \lec \frac{\| U'' \|_{L^\infty (a-\delta , a+\delta )} |y-a'| }{|U(y)-c_R |\inf_{(a-\delta ,a+\delta )} |U'|}\lec 1
\eqne
for $|y-a'|\leq \delta/2$ whenever $a'\in [a-\delta/2, a+\delta /2]$, where $\delta >0$ is sufficiently small so that $U $ is strictly monotone in $(a-\delta,a+\delta )$. On the other hand, the same upper bound (dependent on $\delta$) is trivial in the case $|y-a'|>\delta/2$.

As for the $o(1)$ part in \eqref{approx_sing}, given $\varepsilon >0$ we take $\delta>0$ sufficiently small so that 
\[
\frac12 |U'(a) | \leq | U'(y) | \leq 2 |U'(a)|
\]
for $y\in (a-\delta, a+\delta )$ and we let $\eta >0$ be such that 
\[
\| U'' \|_{L^\infty (a-\eta , a+\eta )} \leq \frac{\varepsilon }{4 |U'(a)|^2}.
\]
Then taking $|c|$ sufficiently small so that $a'\in (a-\eta/2,a+\eta/2 )$ we see from \eqref{conv_to0} that 
\[
\left| \frac{1}{U(y)-c } - \frac{1}{U'(a') (y-a') - ic_I} \right| \lec \frac{4 \| U'' \|_{L^\infty (a-\eta  , a+\eta ) }}{|U'(a)|^2 } \leq \varepsilon
\]
for $|y-a| \leq \eta /2$. For $|y-a | >\eta /2$ we take $|c| $ sufficiently small so that 
\[
\begin{split}
&\left| \im \left( \frac{1}{U(y)-c } - \frac{1}{U'(a') (y-a') - ic_I} \right)\right|  \\
&= \left| \frac{c_I\left( U'(a')(y-a') +U(y) -c_R \right) U''(\xi ) (y-a')^2/2 }{((U(y)-c_R) U'(a')(y-a')-c_I^2)^2+ (c_I(U'(a')(y-a') +(U(y)-c_R )c_I)^2}\right| \\
&\lec_\eta | c_I | \frac{|U'' (\xi )| (y-a')^2}{U'(a')^2 (y-a')^2 + O(|c|)}\lec_\eta |c_I| \leq \varepsilon,
\end{split}\]
as required.
\end{proof}

We can use Lemma~\ref{L02} to obtain logarithmic growth of the Fourier Sine coefficients of $1/(U-c)$.
\begin{lem}\label{L03}
If $|c|$ is sufficiently small then
\eqnb\label{F_coeff}
\left| \int_{-1}^1 \frac{1}{U(y)-c} \sin \frac{k\pi (y+1)}2 \d y \right| \lec \log (1+k )  \eqne
for all $k\geq 0$.
\end{lem}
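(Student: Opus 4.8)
The plan is to split the integral according to whether $y$ is near the critical point $a'$ or away from it. Away from $a'$, say on the set where $|y-a'|>\delta/2$ for a small fixed $\delta>0$ on which $U$ is strictly monotone, the integrand $1/(U(y)-c)$ is bounded uniformly in $c$ (for $|c|$ small), since $|U(y)-c|\geq |U(y)-c_R|-|c_I|\gtrsim 1$ there; hence that portion of the integral is $O(1)$, with no $k$-dependence at all. So the entire logarithmic growth must come from the neighbourhood $|y-a'|\leq\delta/2$.

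On that neighbourhood I would invoke the Approximation Lemma (Lemma~\ref{L02}): write
\[
\frac{1}{U(y)-c} = \frac{1}{U'(a')(y-a')-ic_I} + O(1),
\]
uniformly in $y$, so that the $O(1)$ term again contributes only $O(1)$ to the integral after multiplying by the bounded factor $\sin(k\pi(y+1)/2)$ and integrating over a bounded interval. It thus remains to bound
\[
\left| \int_{|y-a'|\leq \delta/2} \frac{\sin \frac{k\pi(y+1)}2}{U'(a')(y-a')-ic_I}\, \d y \right|.
\]
Substituting $z=y-a'$ and writing $b:=U'(a')$ (with $|b|$ bounded below), this is a tail of the standard integral $\int \frac{e^{ikz}}{bz-ic_I}\,\d z$ type. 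The modulus of $1/(bz-ic_I)$ is $1/\sqrt{b^2z^2+c_I^2}\le 1/|b||z|$, and also $\le 1/|c_I|$; the crude bound $\int_{|z|\le \delta/2}\min(1/|b||z|,\,\dots)\,\d z$ is not quite enough because of the $1/|z|$ singularity, but that is exactly where the oscillation and the $\log(1+k)$ come in. One clean way: split $|z|\le 1/k$ and $1/k<|z|\le\delta/2$. On $|z|\le 1/k$ bound $|\sin(k\pi(y+1)/2)|\lesssim k|z| + (\text{bounded})$ — more simply, just bound the integrand by $1/|b||z|$ fails, so instead bound $\left|\frac{\sin(k\pi(y+1)/2)}{bz-ic_I}\right|$ using $|\sin\theta|\le 1$ and note $\int_{|z|\le 1/k}\frac{\d z}{\sqrt{b^2z^2+c_I^2}}\le \int_{|z|\le 1/k}\frac{\d z}{|b||z|}$ diverges — so one really needs $|\sin(k\pi(y+1)/2)|\le \min(1, k\pi|z|/2 + C|c_I|\cdot 0)$... the honest estimate is $|\sin(k\pi(y+1)/2)|\le C\min(1,k|z - z_0|)$ near a zero $z_0$ of the sine, but since zeros are spaced $2/k$ apart this still gives, after summing over the $O(k\delta)$ intervals of length $2/k$, a bound $\sum_{j=1}^{O(k)} \frac{1}{|b|}\cdot\frac{1/k}{j/k} = \frac{1}{|b|}\sum_{j=1}^{O(k)}\frac1j \lesssim \log(1+k)$. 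On $1/k<|z|\le\delta/2$ one simply bounds $|1/(bz-ic_I)|\le 1/(|b||z|)$ and integrates: $\int_{1/k}^{\delta/2}\frac{\d z}{|b|z}\lesssim \log(1+k)$. Combining gives the claimed bound.

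The main obstacle is handling the region very close to $a'$ where $1/(U'(a')(y-a')-ic_I)$ is large: one must use the oscillation of $\sin(k\pi(y+1)/2)$ (equivalently, that its zeros are spaced $2/k$ apart) to convert the non-integrable $1/|z|$ singularity into the harmonic sum producing $\log(1+k)$, and crucially this bound must be \emph{uniform in $c_I$} (in particular it must hold as $c_I\to0$). The uniformity is automatic from the bound $|1/(bz-ic_I)|\le 1/(|b||z|)$, which is independent of $c_I$; the slightly delicate point is only that the implied constant depends on $\delta$ and on $\inf|U'|$ near $a$, but not on $k$ or $c$. I would organize the write-up as: (i) reduce to the neighbourhood of $a'$ via monotonicity away from $a'$; (ii) replace $1/(U-c)$ by the model singularity via Lemma~\ref{L02}; (iii) substitute and dyadically/arithmetically decompose in $|y-a'|$, using $|\sin|\le 1$ on the innermost scale $|y-a'|\lesssim 1/k$ and summing $1/|z|$ against the spacing of the sine's zeros, and the direct bound $\int_{1/k}^{\delta} \d z/z$ on the outer scales.
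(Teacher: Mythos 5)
Your overall skeleton (reduce to the model kernel $\bigl(U'(a')(y-a')-ic_I\bigr)^{-1}$ via Lemma~\ref{L02}, note the imaginary part is harmless, and extract the $\log(1+k)$ from $\int_{1/k}^{2}\d z/z$ on the region $|y-a'|\gtrsim 1/k$) is the same as the paper's, but your treatment of the innermost region $|y-a'|\lesssim 1/k$ has a genuine gap. There you argue by absolute values, using only that $\sin\frac{k\pi(y+1)}2$ is small near its own zeros, and you claim uniformity in $c_I$ is ``automatic'' from $|1/(b z-ic_I)|\le 1/(|b||z|)$ with $z=y-a'$, $b=U'(a')$. This cannot work: the nearest zero of the sine may lie at distance $\sim 1/k$ from $a'$, so the sine can be of size one at the singularity $y=a'$, and $1/(|b||z|)$ is not integrable across $z=0$. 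Concretely, for the real part of the model kernel,
\[
\int_{|y-a'|\le 1/k}\left|\frac{U'(a')(y-a')}{U'(a')^2(y-a')^2+c_I^2}\right|\d y\;\sim\;\log\frac{1}{k|c_I|},
\]
which tends to $\infty$ as $c_I\to 0^+$ for fixed $k$; hence no absolute-value estimate on this region can be uniform in $c$, and uniformity in $c$ is exactly what the application in Lemma~\ref{L04} requires (a point you yourself flag but do not actually secure).

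The missing idea is cancellation at $y=a'$, not the oscillation of the sine between its own zeros. The real part $\frac{U'(a')(y-a')}{U'(a')^2(y-a')^2+c_I^2}$ is odd about $a'$, and the sine is nearly constant on the scale $1/k$: writing $\sin\frac{k\pi(y+1)}2=\sin\frac{k\pi(a'+1)}2+O(k|y-a'|)$, the constant term contributes nothing over $\{|y-a'|\le 2/k\}$ by oddness, while the $O(k|y-a'|)$ remainder gives
\[
k\int_{|y-a'|\le 2/k}\frac{|U'(a')|\,(y-a')^2}{U'(a')^2(y-a')^2+c_I^2}\,\d y\lec 1,
\]
since the integrand is bounded by $1/|U'(a')|$. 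This is precisely how the paper closes the innermost region, with the imaginary part handled separately by the Poisson-kernel bound $\int_{\R}\frac{|c_I|}{U'(a')^2(y-a')^2+c_I^2}\,\d y\lec 1$. With that replacement, your reduction via Lemma~\ref{L02} and your outer-region estimate go through as written.
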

\begin{proof}
Thanks to Lemma~\ref{L02} it suffices to show that
\eqnb\label{F_coeff_new}
\left| \int_{-1}^1 \frac{1}{U'(a')(y-a') - ic_I } \sin \frac{k\pi (y+1)}2 \d y \right| \lec \log (1+k )
\eqne
We have 
\[
\frac{1}{U'(a') (y-a') -ic_I}  = \frac{U'(a') (y-a')+ic_I}{U'(a')^2(y-a')^2 + c_I^2}.
\]
For the imaginary part, we obtain
\[
\begin{split}
\left| \int_{-1}^1 \frac{c_I}{U'(a')^2 (y-a')^2 +c_I^2 }\sin \frac{k\pi (y+1)}2 \,\d y \right| &\leq  \int_{-1}^1 \frac{|c_I|^{-1}}{(U'(a')(y-a')/c_I )^2+1 }\d y \\
&\leq \frac{1}{|U'(a')|^2} \int_{\R} \frac1{z^2+1} \d z \lec 1. 
\end{split}
\]
As for the real part, we expand $\sin (k\pi (y+1)/2)$ into a Taylor expansion at $a'$ to obtain 
\[\begin{split}
\left| \int_{[-1,1] \cap \{ |y-a'|\leq 2/k\} } \frac{U'(a') (y-a')}{U'(a')^2(y-a')^2 + c_I^2} \sin \,\frac{k\pi (y+1)}2 \d y\right| 
&\lec k \int_{a'-2/k}^{a'+2 /k } \frac{|U'(a')| (y-a')^2 }{U'(a')^2(y-a')^2 + c_I^2}  \d y\\
& \lec k \int_{-2/k}^{2 /k } \frac{y^2 }{y^2 + c_I^2}  \d y \lec 1.
\end{split}\]
For $|y-a'|\geq 2/k$ we have that 
\[
\left| \int_{[-1,1]\cap \{ |y-a'|\geq 2/k\} }   \frac{U'(a') (y-a')}{U'(a')^2(y-a')^2 + c_I^2} \sin \,\frac{k\pi (y+1)}2 \d y  \right| \leq \frac{2}{|U' (a')|} \int_{2 /k }^2 \frac{1}{z} \d z  \lec \log (1+k ),
\]
as required.
\end{proof}

Thanks to the above lemmas we can now establish
\begin{lem}[Remainder norm]\label{L04} For every $c\in \C$, $\varepsilon >0$, $ 
\| R_{\varepsilon , c } \|_{H^1_0 \to H^1_0}  \lec (|c | + |\varepsilon |)$.
\end{lem}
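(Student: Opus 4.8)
The plan is to reduce the $H^1_0\to H^1_0$ bound to a single duality estimate for the singular factor $(U-c)^{-1}$, and to control that estimate uniformly in $c$ via the logarithmic bound on its Fourier sine coefficients from Lemma~\ref{L03}. It suffices to treat $|c|$ small, which is the only regime used below (and the range of validity of Lemmas~\ref{L02}--\ref{L03}).

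First I would put $R_{\varepsilon,c}$ in a workable form. From $\frac{U''}{U}-\frac{U''}{U-c}=-\frac{c\,U''}{U(U-c)}=-\frac{c\kappa}{U-c}$, where $\kappa:=U''/U$, we get $R_{\varepsilon,c}\phi=Kg$ with $g:=\varepsilon\phi-\frac{c\,\kappa\phi}{U-c}$. The key observation is that $\kappa\in W^{1,\infty}(-1,1)$ (in fact $\kappa\in C^2$): since $U(a)=U''(a)=0$ and $U'(a)\ne0$, the singularity of $U''/U$ at $a$ is removable by the division lemma, so $\kappa$ is as smooth as the data permit. Hence multiplication by $\kappa$ is bounded on $H^1_0$, and $\kappa\phi\in H^1_0$ whenever $\phi\in H^1_0$.

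Next I would prove the duality estimate
\[
\Bigl|\int_{-1}^1\frac{h(y)}{U(y)-c}\,\d y\Bigr|\lec\|h\|_{H^1}\qquad\text{for all }h\in H^1_0,
\]
with an implicit constant independent of $c$: expanding $h=\sum_{k\ge1}h_k\sin\frac{k\pi(y+1)}2$ in $L^2(-1,1)$ and setting $b_k:=\int_{-1}^1\frac{1}{U-c}\sin\frac{k\pi(y+1)}2\,\d y$, the left-hand side equals $\sum_{k\ge1}h_kb_k$; Lemma~\ref{L03} gives $|b_k|\lec\log(1+k)$, and since $\sum_{k\ge1}\log^2(1+k)/(1+k^2)<\infty$, the Cauchy--Schwarz inequality together with the Plancherel identity $\sum_{k\ge1}(1+k^2)|h_k|^2\le\|h\|_{H^1}^2$ gives $\sum_{k\ge1}|h_kb_k|\lec\|h\|_{H^1}$. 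Finally I would close with an energy estimate. Writing $\eta:=R_{\varepsilon,c}\phi=Kg\in H^1_0\cap H^2$, which solves $-\eta''+\ta^2\eta=g$ with homogeneous Dirichlet data, and testing against $\bar\eta$ and integrating by parts gives
\[
\|\eta'\|_{L^2}^2+\ta^2\|\eta\|_{L^2}^2=\varepsilon\int_{-1}^1\phi\,\bar\eta-c\int_{-1}^1\frac{\kappa\phi\,\bar\eta}{U-c},
\]
so that, using $\ta^2>0$, $\|\eta\|_{H^1}^2\lec\varepsilon\|\phi\|_{L^2}\|\eta\|_{L^2}+|c|\,\bigl|\int_{-1}^1\frac{\kappa\phi\,\bar\eta}{U-c}\bigr|$. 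Since $\kappa\phi\,\bar\eta\in H^1_0$ with $\|\kappa\phi\,\bar\eta\|_{H^1}\lec\|\phi\|_{H^1}\|\eta\|_{H^1}$ (the algebra property of $H^1(-1,1)$ and the embedding $H^1(-1,1)\hookrightarrow L^\infty$), applying the duality estimate with $h=\kappa\phi\,\bar\eta$ bounds the last term by $|c|\,\|\phi\|_{H^1}\|\eta\|_{H^1}$ up to a constant; dividing by $\|\eta\|_{H^1}$ yields $\|R_{\varepsilon,c}\phi\|_{H^1}=\|\eta\|_{H^1}\lec(\varepsilon+|c|)\|\phi\|_{H^1}$, as claimed.

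The main obstacle is the reason one cannot argue more crudely: the estimate $\|R_{\varepsilon,c}\phi\|_{H^1}\lec\|g\|_{L^2}$, which would follow from $K\colon L^2\to H^2$, is useless because $\|\frac{\kappa\phi}{U-c}\|_{L^2}$ is \emph{not} bounded uniformly as $c\to0$ --- it grows like $|\im\,c|^{-1/2}$, the obstruction being a contribution of the form $\kappa(a')\phi(a')\ne0$ (recall $U(a')=\re\,c$). Passing to the $H^{-1}$--$H^1$ duality (i.e.\ the weak form above) is precisely what lets one exploit the oscillation of $(U-c)^{-1}$ rather than its modulus, and the only genuinely nontrivial input this requires is the uniform logarithmic bound of Lemma~\ref{L03} together with Plancherel. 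Everything else --- the regularity of $U''/U$ and the $H^1$ product rule --- is routine.
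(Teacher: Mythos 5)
Your proof is correct (with the same implicit restriction as the paper's own argument: one really works with $|c|$ small and $\im\,c\neq 0$, which is the only regime in which $R_{\varepsilon,c}$ and Lemma~\ref{L03} make sense), but the mechanism differs from the paper's. Both proofs rest on the same key input --- the logarithmic bound of Lemma~\ref{L03} on the Fourier sine coefficients of $(U-c)^{-1}$, together with Plancherel and the fact that $U''/U$ has a removable singularity at $a$, hence is Lipschitz (this is where the regularity of $U$ enters in either route). The paper estimates $\|K(fg)\|_{H^1}$ directly on the Fourier side: the $k$-th coefficient of the product $fg$ is bounded by a discrete convolution $\sum_m |g_m|\log(1+|k-m|)$, and the splitting $\log(1+|k-m|)\lesssim (1+|k|)^{1/4}(1+|m|)^{1/4}$ plus Cauchy--Schwarz yields $\|K(fg)\|_{H^1}\lesssim \|g\|_{H^1}\lesssim |c|\,\|\phi\|_{H^1}$. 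You instead test the equation $-\eta''+\ta^2\eta=g$ against $\bar\eta$ and reduce everything to the single scalar pairing bound $|\int_{-1}^1 h/(U-c)|\lesssim \|h\|_{H^1}$ for $h\in H^1_0$, applied to $h=\kappa\phi\bar\eta$ via the one-dimensional algebra property of $H^1$; this avoids any product/convolution estimate on Fourier coefficients and isolates where the oscillation of $(U-c)^{-1}$ is used, at the modest price of invoking $H^1\hookrightarrow L^\infty$ and the trilinear product bound. Two small points to tidy: in the tested identity the right-hand side should be handled by taking real parts or absolute values (its being real is a consequence, not an input), and in the duality step you should note that the interchange of sum and integral and the bound $\sum_k(1+k^2)|h_k|^2\lesssim\|h\|_{H^1}^2$ use precisely that $h=\kappa\phi\bar\eta$ vanishes at $y=\pm1$, which it does since $\phi,\eta\in H^1_0$ and $\kappa\in W^{1,\infty}$.
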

\begin{proof}
Letting $f\coloneqq (U-c)^{-1}$, and $g\coloneqq c U''\phi /U$ we see that 
\[
R_{\varepsilon,c} \phi = K  (fg) - \varepsilon \phi .
\]
The first term on the right-hand side is the solution $\psi$ to $\psi''-\ta^2 \psi = fg$, which we can estimate in $H^1$ by denoting by $h_m$ the $m$-th Fourier Sine coefficient of a function $h\colon (-1,1)\to \C$, and applying the Plancherel theorem, 
\[\begin{split}
\| \psi \|_{H^1}^2 &\sim \sum_{k\in \Z} \frac{1+k^2}{(\ta^2 + k^2 )^2 } \left( \sum_{m\in \Z}  g_m \log (1+|k-m|)    \right)^2 \\
&\lec \sum_{k\in \Z} \frac{1+k^2}{(\ta^2 + k^2 )^2 } \left( \sum_{m\in \Z}  g_m (1+|k|)^{1/4} (1+|m|)^{1/4}    \right)^2 \\
&\lec \sum_{k\in \Z} \frac{(1+k^2)(1+|k|)^{1/2}}{(\ta^2 + k^2 )^2 }  \left( \sum_{m\in \Z}  (1+|m|)^{3/2}  |g_m|^2  (1+|m|)^{1/2}    \right)\left( \sum_{m\in \Z}  \frac{1}{1+|m|^{3/2}} \right)  \\
&\lec\| g \|_{H^1}^2 \sum_{k\in \Z} \frac{(1+k^2)(1+|k|)^{1/2}}{(\ta^2 + k^2 )^2 }  \\
&\lec \| g \|_{H^1}^2 \lec_U |c|^2 \| \phi \|_{H^1}^2,
\end{split}
\] 
which gives the required estimate, where we used Lemma~\ref{L03} in the first line. 
\end{proof}
\begin{rem}
We note that the above proof is the only place where we use the assumption that $U\in C^5$ (which is needed for the last inequality). This regularity is not essential; for example Lin~\cite{lin_siam} used only $C^2$ regularity, although one needs a little bit more (such as $U\in C^{2,\gamma}$ for some $\gamma>0$) in order to make sense of the principal value in Plemelj-Sochocki formula \eqref{plemelj_formula}.  For such regularity an alternative proof of Lemma~\ref{L04} below is needed, requiring $C^\gamma$ estimates, for example, as in \cite[Lemma~4.8.1]{ABCD}.
\end{rem}

The invertibility of $T$ and the estimate on $\| R_{\varepsilon ,c}\|$ let us rewrite the projected equation \eqref{proj} as
\[
\psi = ( T^{-1} \circ R_{\varepsilon, c} ) \psi + (T^{-1} \circ R_{\varepsilon, c}) \tp,
\]
which has a unique solution $\psi$ for every sufficiently small $\varepsilon, |c|$, given by the Neumann series expansion,
\eqnb\label{neumann}
\psi = (I-T^{-1}\circ R_{\varepsilon, c} )^{-1}T^{-1} \circ R_{\varepsilon, c} \tp = \sum_{k\geq 1 } (T^{-1}\circ R_{\varepsilon, c})^k \tp .
\eqne
It thus remains to determine the choice of $\epsilon$, $c$ solving the reduced equation \eqref{reduced}, namely the equation
\eqnb\label{reduced_new}
G(c,\varepsilon )=0,
\eqne
where we denoted by $\psi$ the solution \eqref{neumann} of the projected equation \eqref{proj} for given $\epsilon$, $c$, and we set
\eqnb\label{def_G}
G(c,\varepsilon ) \coloneqq (\psi , \tp )=  (T^{-1}\circ R_{\varepsilon, c} \tp , \tp ) + \sum_{k\geq 2} ((T^{-1}\circ R_{\varepsilon, c})^k \tp , \tp ).
\eqne
We have the following.
\begin{lem}[Expansion of $G$]\label{L06}
    $G$ is holomorphic in $c\in \{ z\in \C \colon \im \, z >0 \}$ and satisfies
    \eqnb\label{exp_G}
    G(c,\varepsilon)=c\lambda +\varepsilon+o(|c|+|\varepsilon|)
    \eqne
    as $|c|,\varepsilon \to 0$, $\im \,c>0$, where $\lambda \in \C$ is such that $\im \, \lambda >0$.
\end{lem}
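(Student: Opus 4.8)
The plan is to compute the leading-order behavior of each of the two terms in \eqref{def_G} separately. For the second (higher-order) term $\sum_{k\geq 2} ((T^{-1}\circ R_{\varepsilon,c})^k \tp,\tp)$, I would simply bound it in operator norm: since $\|T^{-1}\|$ is a fixed constant (Lemma~\ref{L_T}) and $\|R_{\varepsilon,c}\|_{H^1_0\to H^1_0}\lec |c|+\varepsilon$ (Lemma~\ref{L04}), the $k$-th term is $O((|c|+\varepsilon)^k)$, so the whole tail is $O((|c|+\varepsilon)^2)=o(|c|+\varepsilon)$. Thus the entire content of the expansion lies in the first term $(T^{-1}\circ R_{\varepsilon,c}\tp,\tp)$.

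For that term, I would first note that $T = L + P + (\text{something})$ — more precisely, that on $\tp$ and to leading order $T^{-1}$ acts almost like the identity in the relevant pairing. The cleanest route: write $(T^{-1}\circ R_{\varepsilon,c}\tp,\tp) = (R_{\varepsilon,c}\tp, (T^{-1})^*\tp)$, and since $\tp$ spans $\ker(L)$ and $L$ is self-adjoint, one checks that $(T^{-1})^*\tp = \tp$ (because $T^*\tp = L\tp + P\tp + K(\cdots)^*\tp$-type computation collapses — indeed $T^*\tp=\tp$ follows from $L\tp=0$ and $P\tp=\tp$, modulo checking the $K(U''/U \cdot)$ adjoint term vanishes against $\tp$, which it does since $K$ maps into $\ker(L)^\perp$ in the right sense). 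So the first term equals $(R_{\varepsilon,c}\tp,\tp) + o(|c|+\varepsilon)$, and then directly from the definition $R_{\varepsilon,c}\phi = \varepsilon\phi + K((U''/U - U''/(U-c))\phi)$,
\[
(R_{\varepsilon,c}\tp,\tp) = \varepsilon\|\tp\|^2 + \left(K\!\left(\Big(\tfrac{U''}{U}-\tfrac{U''}{U-c}\Big)\tp\right),\tp\right) = \varepsilon + \left(\Big(\tfrac{U''}{U}-\tfrac{U''}{U-c}\Big)\tp, K\tp\right),
\]
and since $K\tp = \ta^{-2}\tp$ (as $-\tp''+\ta^2\tp = (\ta^2 - \alpha^2 + \text{...})$... actually $-\tp'' + \ta^2\tp = \tp'' $ correction — rather, from \eqref{eq_phi_basic} with $\alpha=\ta$, $c=0$: $-\tp''+\ta^2\tp = -\frac{U''}{U}\tp$, so $K$ applied to $-\frac{U''}{U}\tp$ gives $\tp$; in any case $(\cdot,K\tp)$ reduces to an explicit weighted integral against $\tp$). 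This yields $G(c,\varepsilon)= \varepsilon + c\lambda + o(|c|+\varepsilon)$ where
\[
\lambda = \lim_{\im c>0,\,c\to 0}\frac{1}{c}\left(\Big(\tfrac{U''}{U}-\tfrac{U''}{U-c}\Big)\tp, K\tp\right) = \text{const}\cdot\lim_{\im c>0,\,c\to0}\int_{-1}^1 \frac{-U''(y)\,\tp(y)^2}{(U(y)-c)U(y)}\,\d y,
\]
matching the formula for $\lambda$ in the introduction.

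The key analytic step — and the main obstacle — is evaluating this limit and extracting its imaginary part, i.e. proving $\im\,\lambda>0$. Here I would invoke Lemma~\ref{L02}: replace $1/(U(y)-c)$ by $1/(U'(a')(y-a')-ic_I)$ up to $O(1)+io(1)$ errors, which upon integration against the bounded weight $-U''\tp^2/U$ contributes only to the real part and an $o(1)$ imaginary part. Then apply the Plemelj–Sochocki formula \eqref{plemelj_formula} to $1/(U'(a')(y-a')-ic_I)$, whose imaginary part converges (as a distribution) to $\pi\,\delta_{y=a}/|U'(a)|$ — using the Dominated Convergence Theorem as the footnote suggests — giving
\[
\im\,\lambda = \pi\,\frac{(-U''(a))\,\tp(a)^2}{|U'(a)|\,U'(a)}\cdot(\text{sign factor}) > 0,
\]
which is strictly positive precisely because $-U''/U\geq 0$ with $U(a)=0$, $U'(a)\neq0$, so $-U''(a)/U'(a)$ has the correct sign, and $\tp(a)\neq 0$ by \eqref{phi_nonzero}. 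The holomorphicity of $G$ in the open upper half-plane is comparatively easy: each term $(T^{-1}\circ R_{\varepsilon,c})^k\tp$ depends holomorphically on $c$ because $c\mapsto 1/(U(y)-c)$ is holomorphic for $\im c>0$ uniformly enough (with locally uniform Fourier-coefficient bounds from Lemma~\ref{L03}) to differentiate the Neumann series term by term, and uniform convergence of the series preserves holomorphicity. The one subtlety to handle carefully is the sign/orientation bookkeeping in $\lambda$ so that $\im\lambda>0$ (rather than $<0$) — the sign conventions in \eqref{plemelj_formula} versus \eqref{lambda_fact} must be tracked, since $-1/\lambda$ must land in the upper half-plane for \eqref{c_expansion} to give a genuine instability.
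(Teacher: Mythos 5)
Your treatment of the tail ($O((|c|+\varepsilon)^2)$ via $\|T^{-1}\|\,\|R_{\varepsilon,c}\|$), of the limit of the singular integral (Lemma~\ref{L02} plus dominated convergence plus the Plemelj--Sochocki formula), and of holomorphicity all match the paper's argument. The genuine gap is in the algebraic step that removes $T^{-1}$ from the pairing. You claim $(T^{-1})^*\tp=\tp$, i.e.\ $T^*\tp=\tp$; this is false. Computing the $L^2$-adjoint from $T\phi=\phi+K\bigl(\tfrac{U''}{U}\phi+P\phi\bigr)$ gives $T^*\psi=\psi+\tfrac{U''}{U}K\psi+PK\psi$, so $T^*\tp=\tp+\tfrac{U''}{U}K\tp+(K\tp,\tp)\tp\neq\tp$; in particular $K$ does \emph{not} map into ``$\ker(L)^\perp$'' --- on the contrary, $K\bigl(-\tfrac{U''}{U}\tp\bigr)=\tp\in\ker L$. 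This matters quantitatively: since $\|R_{\varepsilon,c}\tp\|\lesssim |c|+\varepsilon$, replacing $(T^{-1})^*\tp$ by $\tp$ introduces an error $(R_{\varepsilon,c}\tp,(T^{-1})^*\tp-\tp)=O(|c|+\varepsilon)$, the \emph{same} order as the terms you are trying to identify, so it cannot be absorbed into $o(|c|+\varepsilon)$ and would alter both the coefficient of $\varepsilon$ and the value of $\lambda$. Relatedly, you misquote $R_{\varepsilon,c}$: from \eqref{proj1} it is $R_{\varepsilon,c}\phi=K\bigl(\varepsilon\phi+(\tfrac{U''}{U}-\tfrac{U''}{U-c})\phi\bigr)$, with $K$ also acting on $\varepsilon\phi$; and even granting your manipulations, the weight you produce in the $c$-term is $K\tp$, not $\tp$, so the expression does not ``match the formula for $\lambda$ in the introduction'' --- it is a different integral.

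The paper's route is the correct repair, and it is short: (i) $T\tp=K\tp$, because $K\bigl(\tfrac{U''}{U}\tp\bigr)=-\tp$ (the eigenfunction equation) and $P\tp=\tp$, hence $T^{-1}K\tp=\tp$; this is what makes the $\varepsilon$-coefficient exactly $1$ via $\varepsilon(T^{-1}K\tp,\tp)=\varepsilon$; (ii) $T^{-1}\circ K$ is self-adjoint (the paper's footnote), so $\bigl(T^{-1}K[(\tfrac{U''}{U}-\tfrac{U''}{U-c})\tp],\tp\bigr)=\bigl((\tfrac{U''}{U}-\tfrac{U''}{U-c})\tp,\,T^{-1}K\tp\bigr)=c\,\Gamma(c)$ with $\Gamma(c)=\bigl(\tfrac{-U''}{(U-c)U}\tp,\tp\bigr)$, exactly the integral whose limit you then correctly analyze. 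Finally, the sign bookkeeping you flag as ``to be handled carefully'' is part of the claim $\im\lambda>0$ and should be carried out: with $h=-\tfrac{U''}{U}|\tp|^2\geq 0$, $h(a)>0$, the Plemelj limit of $\int h(y)\bigl(U'(a')(y-a')-ic_I\bigr)^{-1}\d y$ has imaginary part $+\pi h(a)/|U'(a)|$ regardless of the sign of $U'(a)$, which is the positivity you need.
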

\begin{proof} We first note that the last sum in \eqref{def_G} can be bounded by $o(|c|+\varepsilon)$,
\eqnb\label{Lower_terms}
\begin{split}
\left| \sum_{k\geq 2} \left( (T^{-1}\circ R_{\varepsilon, c})^k \tp , \tp \right) \right|&\leq  
\sum_{k\geq 2} \| (T^{-1}\circ R_{\varepsilon, c})^k \|_{H^1_0\to H^1_0} \lec \sum_{k\geq 2} \| T^{-1}\|^k \| R_{\varepsilon , c }\|^k \notag\\  & \lec \sum_{k\geq 2} (C(|\varepsilon | + |c |))^k \lec O((|\varepsilon | + |c|)^2).
\end{split}
\eqne
As for the linear term, we have 
\begin{align*}
    \left(T^{-1}\circ R_{\varepsilon, c} \tp , \tp \right)&=\left(T^{-1}\circ K \left(\frac{U''}{U}-\frac{U''}{U-c}\right) \tp , \tp \right)+\varepsilon\left(T^{-1}\circ K\tp ,\tp \right)\\
    &=\left( \left(\frac{U''}{U}-\frac{U''}{U-c}\right) \tp , T^{-1}\circ K \tp \right)+\varepsilon\\
    &=c \Gamma (c) + \varepsilon,
\end{align*}
where we used the fact that $T \tp =K \tp$ (recall~\eqref{proj1}), the fact that  $T^{-1}\circ K$ is self-adjoint\footnote{In order to see it, note that, for every $f\in L^2$ we have that $w=Kf$ iff $-w''+\widetilde{\alpha}^2 w =f$. Also, $\varphi = T^{-1}\circ K f $ iff $T\varphi = Kf$, i.e. $\varphi + K\left( \frac{U''}U \varphi + P\varphi \right) = w$. In other words, $K\left( \frac{U''}U \varphi + P\varphi \right)= w-\varphi$, which means that $w-\varphi$ is a solution to $-(w-\varphi)''+\widetilde{\alpha }^2 (w-\varphi )= \frac{U''}U\varphi +P\varphi$, i.e. $-\varphi'' +\widetilde{\alpha}^2 \varphi + \frac{U''}U \varphi +P\varphi = -w''+\widetilde{\alpha}^2 w =f$. The self-adjointness is now clear, since all operators on the left-hand side are self-adjoint.}, and we set  
\[\Gamma(c)\coloneqq \left(\frac{-U''}{(U-c)U}\tp,\tp\right).\]
It remains to show that 
\[\lim_{c_I>0, c\to 0}\Gamma(c)=\lambda,\]
where $\lambda$ is a complex number with positive imaginary part.

Letting 
\[
h \coloneqq -\frac{U''}{U} | \tp |^2,
\]
we see that $h$ is H\"older continuous on $[-1,1]$, and that
\begin{align*}
\Gamma(c)=\int_{-1}^1 \frac{1}{(U-c)}h\,\d y=&\int_{-1}^1 \frac{1}{(U'(a')(y-a')-ic_I)}h(y)\,\d y\\
&\hspace{2cm}+\int_{-1}^1 \left( \frac{1}{U(y)-c } - \frac{1}{U'(a') (y-a') - ic_I} \right) h(y)\,\d y
\end{align*}
We now note that the last integrand is bounded, with its imaginary part converging pointwise to $0$, due to Lemma~\ref{L02}, and so, applying the Dominated Convergence Theorem and the Plemelj-Sochocki formula \eqref{plemelj_formula} (see also Lemma~\ref{L_plemelj} for more details), we obtain that there exists $C\in \R$ such that
\eqnb\label{limit_of_gamma}
\Gamma (c) \to C + i \pi |U'(a)|^{-1} h(a) \coloneqq \lambda
\eqne
as $c\to 0$, $\im\,c>0$. Note that $\im\,\lambda >0$, as required, since $U'(a) \ne 0$ (recall the comments above Lemma~\ref{L02}), and $h(a)>0$ (recall assumption mentioned around \eqref{Kgeq0} and \eqref{phi_nonzero}).
\end{proof}
With the expansion given by Lemma~\ref{L06} we first note that, if there was no ``$o(|c|+\varepsilon )$'' in \eqref{exp_G}, then the only solution $c=\widetilde{c} (\varepsilon )$ to the reduced equation \eqref{reduced_new} would be
\eqnb\label{tildec}
\widetilde{c} (\varepsilon ) \coloneqq \frac{-\varepsilon }\lambda = \frac{-\varepsilon \lambda_R + i \varepsilon \lambda_I}{|\lambda|^2},
\eqne
which describes a straight line originating from $c=0$ (at $\varepsilon =0$) and propagating into the upper complex plane (so that $\im \,c>0$ as required) as $\varepsilon $ increases. In order to incorporate the ``$o(|c|+\varepsilon )$'' term, we write
\[
G(c,\varepsilon ) = \underbrace{\lambda(a)(c-\widetilde{c})}_{=:f(c)}+\underbrace{o(\varepsilon+|c-\widetilde{c}|)}_{=:g(c)} ,
\]
and we consider $|c-\widetilde{c}|,\varepsilon$ sufficiently small so that 
\[|g(c,\varepsilon )| \leq  \frac{\varepsilon }8 + \frac{|\lambda (c-\widetilde{c})|}4.
\]    
Then, for $|c- \widetilde{c}|= \varepsilon /2|\lambda |$ (with possibly smaller $\varepsilon$, so that the last inequality remains true), we have 
\[|g(c , \varepsilon )| \leq \frac{2|\lambda (c-\widetilde{c}) |}8+ \frac{|\lambda (c-\widetilde{c})|}4 = \frac{|\lambda (c-\widetilde{c})|}2 =   \frac{|f(c)|}2 
\]
Hence
\[
|g(c,\varepsilon ) |< |f(c)| \qquad \text{ for } c\in \p D\left( \widetilde{c}(\varepsilon ), \frac{\varepsilon }{2|\lambda | } \right).
\]
Thus, since, in $D\left( \widetilde{c}(\varepsilon ), {\varepsilon }/{2|\lambda | } \right)$, both $g(\cdot ,\varepsilon )$ and $f$ are holomorphic and $f$ has exactly one zero $\widetilde{c}(\varepsilon )$, Rouch\'e's Theorem implies that $G(\cdot ,\varepsilon )=f+g(\cdot , \varepsilon )$ also has a unique zero in $D\left( \widetilde{c}(\varepsilon ), {\varepsilon }/{2|\lambda | }\right)$, which gives the desired solution of the reduced equation \eqref{reduced_new}. 
\section{Proof of Theorem~\ref{T02}}\label{sec_thm2}
Here we prove Theorem~\ref{T02}. We let $\phi = \phi_{\ii } \chi_{\ii } + \phi_{\out}\chi_{\out}$. Then $\phi $ solves \eqref{rayleigh} if $\phi_{\ii }$, $\phi_{\out}$ solve
\eqnb\label{coupled1a}
\phi_{\ii }'' - \ta^2 \phi_{\ii } + \varepsilon \phi_{in} + \frac{U''}{U -c } \phi_{\ii } = - 2 \phi_{\out}'\chi_{\out}' - \phi_{\out} \chi_{\out}'',
\eqne
\eqnb\label{coupled1b}
\phi_{\out}'' - \ta^2 \phi_{\out} + \varepsilon \phi_{\out}  = - 2 \phi_{\ii }'\chi_{\ii }' - \phi_{\ii } \chi_{\ii }''.
\eqne
Let us apply the rescaling $R\colon L^2 \to L^2$, $\Phi (\xi ) = R \phi (\xi )= \phi ( y)$, where $\xi \coloneqq ky $.
Note that
\eqnb\label{norm_R}
\| R \| = k^{1/2},
\eqne
and that $\ta /k  - \alpha_0\to 0$ as $k\to \infty$ (see Lemma~\ref{L_app}). We can thus rewrite \eqref{coupled1a} as the projection of the rescaled inner equation,
\eqnb\label{eqPsi1}
\begin{split}
-\Psi'' &+ \alpha_0^2 \Psi -   \frac{U''}{U} \Psi + P_0 \Psi = \frac{1}{k^2}  R \left(  2\phi_{\out}' \chi_{\out}' + \phi_{\out} \chi_{\out}'' \right) \\
& \hspace{4cm}+\left(  \frac{\varepsilon}{k^2}  - \left(\frac{\ta^2}{k^2} - \alpha_0^2 \right) + \left( \frac{U''}{U} - \frac{U''}{U-c} \right) \right)(\Psi + \Phi_0 )
\end{split}
\eqne 
together with the reduced equation
\eqnb\label{eq2_reduced}
(\Psi , \Phi_0)=0,
\eqne
where we used the decomposition
\[
\Phi_{\ii}= \Phi_0 + \Psi,
\]
and $P_0\Psi \coloneqq (\Psi , \Phi_0)\Phi_0 $ is the projection onto the neutral eigenfunction $\Phi_0$ of the inner problem (recall~\eqref{Phi0}).

As in Section~\ref{sec:pf:T1} we let $K:L^2 \to H^2$ denote the solution operator of $-\Psi''+\alpha_0^2 \Psi = F$, namely 
\[
KF (\xi ) \coloneqq \frac{1}{2\alpha_0}\int_{\R } \ee^{-\alpha_0|\xi - \eta |} F(\eta ) \d \eta ,
\]
and we set
\[
T \Psi \coloneqq \Psi + K \left(-\frac{U''}U\Psi +P_0 \Psi \right),
\]
and 
\[
R_{\delta ,c} \Psi \coloneqq \left( \delta +\left( \frac{U''}{U} - \frac{U''}{U-c} \right) \right) \Psi
\]
We observe, as in Lemma~\ref{L_T}, that $T\colon H^1 \to H^1$ is invertible and that 
\[\| R_{\delta ,c} \|_{H^1\to H^1} \lec \delta + |c|.
\]
Thus, we can rewrite the projected inner equation \eqref{eqPsi1} as
\eqnb\label{eqPsi2}
\Psi = T^{-1} \circ R_{\delta, c} (\Psi + \Phi_0 ) + \underbrace{\frac{1}{k^2}T^{-1} \circ K \circ R \left( - 2\phi_{\out}' \chi_{\out}' - \phi_{\out} \chi_{\out}'' \right)}_{=: B \phi_{\out}}   ,
\eqne
where $\delta \coloneqq \frac{\varepsilon}{k^2} -\left(\frac{\ta^2}{k^2} - \alpha_0^2 \right)$. 
We note that $\| B \|_{Z \to H^1} \lec L^{-1/2} $, since
\[
\left\| R \left( 2\phi_{\out}' \chi_{\out}' + \phi_{\out} \chi_{\out}'' \right) \right\|  \lec k^{1/2} \left( k \| \phi_{\out }' \|  + k^2 \| \phi_{\out } \| \right) = \frac{k^2}{\sqrt{L}} \| \phi_{\out} \|_Z,
\]
where we used \eqref{norm_R}. Letting 
\[
\mathsf{C} \colon H^1 \to Z, \quad  \mathsf{C}\Upsilon =  g \Leftrightarrow   -g'' + \ta^2 g - \varepsilon g  =  2 (R^{-1} \Upsilon)' \chi_{\ii }' + R^{-1}  \Upsilon \chi_{\ii }'' 
\]
for $\varepsilon < \ta^2/2$, one obtains
\[
k\| (\mathsf{C}\Upsilon )' \| + k^2 \| \mathsf{C}\Upsilon \| \leq \left\| 2 (R^{-1} \Upsilon )'\chi_{\ii }' + R^{-1} \Upsilon \chi_{\ii }'' \right\|\lec k^{1/2} \| \Upsilon' \| \frac{k}{L} + k^{-1/2} \| \Upsilon \| \frac{k^2}{L^2} \lec \frac{k^{3/2}}{L} \| \Upsilon \|_{H^1},
\]
i.e. that $\| \mathsf{C} \|_{H^1 \to Z } \lec L^{-1/2}$.
Thus, setting $\mathsf{A} \coloneqq T^{-1} \circ R_{\delta , c } \colon H^1 \to H^1$, we can rewrite \eqref{coupled1a}--\eqref{coupled1b} as
\eqnb\label{entire_clean}
\begin{pmatrix}
I  & 0  \\ 0 & I
\end{pmatrix}
\begin{pmatrix}
\Psi \\ \phi_{\out}
\end{pmatrix}
+
\begin{pmatrix}
-\mathsf{A} & -\mathsf{B} \\ -\mathsf{C}  &0
\end{pmatrix}
\begin{pmatrix}
\Psi \\ \phi_{\out}
\end{pmatrix}
= \begin{pmatrix}
\mathsf{A} \Phi_0 \\ \mathsf{C} \Phi_0
\end{pmatrix}\qquad \text{ in } H^1\times Z,
\eqne
Since 
\[
\| \mathsf{A} \| \lec  \frac{\varepsilon}{k^2}  + \left| \frac{\ta^2}{k^2} - \alpha_0^2 \right| + |c| ,\qquad \| \mathsf{B} \|,\| \mathsf{C} \| \lec L^{-1/2},
\]
the system \eqref{entire_clean} has a unique solution for all sufficiently small $|\varepsilon|$, $|c|$ and sufficiently large $k$, satisfying estimate \eqref{multisc_ests}. 
In order to solve the reduced equation \eqref{eq2_reduced}, we set, analogously to Lemma~\ref{L06},
\[
G(\delta , c) \coloneqq (\Psi , \Phi_0 ),
\]
where $\Psi $ is the solution to \eqref{entire_clean} with $\delta = \frac{\varepsilon}{k^2} - \left| \frac{\ta^2}{k^2} - \alpha_0^2 \right|$. As below Lemma~\ref{L06} we conclude that for every sufficiently small $\delta$ (i.e. for sufficiently small $\epsilon$ and sufficiently large $k$) there exists $c=c(\delta )$ such that $G(\delta ,c)=0$, which concludes the proof of Theorem~\ref{T02}.

\section*{Conflict of interest statement}

The authors declare that there is no conflict of interest.

\section*{Data availability statement}

The authors declare that there is no data associated with this work.

\section*{Acknowledgements}
The authors are grateful to Michele Dolce, Zhiwu Lin and Chongchun Zeng for interesting discussions.

\appendix

\section{Plemelj-Sochocki formula}
\begin{lem}\label{L_plemelj}
Let $a$ and $b$ be constants such that $a<0<b$. Let $f\in C^{0,\alpha}[a,b]$ for some $\alpha \in (0,1)$. Then
\eqnb \label{Plemelj}
\lim_{(\delta, \epsilon)\to (0,0^{+}) } \int^b_a \frac{f(x)}{x+\delta \pm i\epsilon}\,\d x=\mp i\pi f(0)+ \mathrm{p.v.}\int^b_a \frac{f(x)}{x}\,\d x.
\eqne
\end{lem}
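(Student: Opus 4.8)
The plan is to prove the Plemelj–Sochocki formula \eqref{Plemelj} by reducing it to the classical Dirichlet-type computation, handling the $\pm i\epsilon$ and the shift $\delta$ simultaneously. I will present the $-i\pi f(0)$ case (i.e.\ the ``$+$'' sign in $x+\delta+i\epsilon$); the other sign follows by complex conjugation of the integrand, or equivalently by replacing $\epsilon$ with $-\epsilon$ and noting the symmetry of the argument.

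\medskip

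\textbf{Step 1: Split off the constant $f(0)$.} Write
\[
\int_a^b \frac{f(x)}{x+\delta+i\epsilon}\,\d x = f(0)\int_a^b \frac{\d x}{x+\delta+i\epsilon} + \int_a^b \frac{f(x)-f(0)}{x+\delta+i\epsilon}\,\d x.
\]
For the first term, an explicit antiderivative gives $\int_a^b \frac{\d x}{x+\delta+i\epsilon} = \log(b+\delta+i\epsilon) - \log(a+\delta+i\epsilon)$ using the principal branch. As $(\delta,\epsilon)\to(0,0^+)$ this converges to $\log b - \log(-a) - i\pi = \log(b/|a|) - i\pi$, since $b+\delta+i\epsilon$ approaches the positive real number $b$ while $a+\delta+i\epsilon$ approaches the negative real number $a$ from the \emph{upper} half-plane, contributing the argument $+\pi$ in the subtracted logarithm. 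Note $\log(b/|a|) = \mathrm{p.v.}\int_a^b \frac{\d x}{x}$, so this term already produces $-i\pi f(0) + f(0)\,\mathrm{p.v.}\int_a^b \frac{\d x}{x}$.

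\medskip

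\textbf{Step 2: The regularized remainder converges to its principal value.} It remains to show
\[
\int_a^b \frac{f(x)-f(0)}{x+\delta+i\epsilon}\,\d x \longrightarrow \mathrm{p.v.}\int_a^b \frac{f(x)-f(0)}{x}\,\d x = \mathrm{p.v.}\int_a^b \frac{f(x)}{x}\,\d x - f(0)\,\mathrm{p.v.}\int_a^b\frac{\d x}{x},
\]
the last equality combining with Step 1 to give exactly the right-hand side of \eqref{Plemelj}. Here the H\"older hypothesis is essential: $|f(x)-f(0)| \lec |x|^\alpha$, so the integrand of the limiting principal-value integral is in fact absolutely integrable (the singularity $|x|^{\alpha-1}$ is integrable), and no genuine principal value is needed on the right. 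For the convergence, I would use the bound $\left|\frac{f(x)-f(0)}{x+\delta+i\epsilon}\right| \leq \frac{C|x|^\alpha}{|x+\delta+i\epsilon|}$. Splitting the domain into $|x|\le 2|\delta|$ and $|x|>2|\delta|$: on the small set, $|x|^\alpha \lec |\delta|^\alpha$ and the measure is $O(|\delta|)$, while $|x+\delta+i\epsilon|\ge \epsilon$ could be small, but one instead estimates $\int_{|x|\le 2|\delta|} \frac{|x|^\alpha}{|x+\delta+i\epsilon|}\,\d x$ directly — since $|x|^\alpha \lec |\delta|^\alpha$ and $\int_{-2|\delta|}^{2|\delta|}\frac{\d x}{|x+\delta+i\epsilon|} \lec \log(1/\epsilon)$ in the worst case, a cleaner route is to bound $|x|^\alpha/|x+\delta+i\epsilon| \lec |x+\delta|^\alpha/|x+\delta+i\epsilon| + |\delta|^\alpha/|x+\delta+i\epsilon|$ and use $|t|^\alpha/|t+i\epsilon| \le |t|^{\alpha-1}$, which is uniformly integrable. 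On the large set $|x|>2|\delta|$, we have $|x+\delta+i\epsilon| \ge |x|/2$ (for $\epsilon$ small relative to $|x|$, handled by a further dyadic split near $\epsilon$), so the integrand is dominated by $C|x|^{\alpha-1}\in L^1$, and pointwise convergence of $\frac{f(x)-f(0)}{x+\delta+i\epsilon} \to \frac{f(x)-f(0)}{x}$ for $x\ne 0$ lets the Dominated Convergence Theorem finish the job.

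\medskip

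\textbf{Main obstacle.} The delicate point is the uniform control of the remainder integral near $x=0$ when $\epsilon$ and $\delta$ are both small and of possibly very different sizes; a naive dominated-convergence argument fails because the pointwise dominating function $|x|^{\alpha-1}$ is not, a priori, an upper bound for $\frac{|x|^\alpha}{|x+\delta+i\epsilon|}$ when $|x+\delta|\ll |x|$. The clean fix is the algebraic inequality $\dfrac{|x|^\alpha}{|x+\delta+i\epsilon|} \le \dfrac{(|x+\delta|+|\delta|)^\alpha}{|x+\delta+i\epsilon|} \lec \dfrac{|x+\delta|^\alpha + |\delta|^\alpha}{|x+\delta+i\epsilon|} \le |x+\delta|^{\alpha-1} + \dfrac{|\delta|^\alpha}{|x+\delta+i\epsilon|}$, after which the change of variables $t = x+\delta$ reduces everything to the single-parameter estimate $\int \frac{|t|^\alpha}{|t+i\epsilon|}\,\d t$ and $\int \frac{|\delta|^\alpha}{|t+i\epsilon|}\,\d t$ over a bounded interval, both of which are $O(1)$ uniformly (the second being $O(|\delta|^\alpha \log(1/\epsilon))$, which still $\to 0$ provided one first restricts, say, to $\epsilon \ge e^{-1/|\delta|}$; the complementary regime $\epsilon$ exponentially small in $|\delta|$ is even easier since then $|\delta|$ dominates and one compares with the already-treated case $\delta=0$). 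Once this uniform integrability is in hand, the limit is routine.
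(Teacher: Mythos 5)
Your Step 1 is correct (the explicit complex logarithm cleanly produces $-i\pi f(0)+f(0)\,\mathrm{p.v.}\int_a^b \frac{\d x}{x}$), and the overall architecture is sound, but there is a genuine gap in Step 2, exactly at the point you flag as the main obstacle. After the bound $\frac{|x|^\alpha}{|x+\delta+i\epsilon|}\lec |x+\delta|^{\alpha-1}+\frac{|\delta|^\alpha}{|x+\delta+i\epsilon|}$, the second piece integrates to size $|\delta|^\alpha\log(1/\epsilon)$, and your two-regime patch does not close it. First, the cut you propose is miscalibrated: if $\epsilon\ge \ee^{-1/|\delta|}$ then $|\delta|^\alpha\log(1/\epsilon)\le |\delta|^{\alpha-1}\to\infty$ (a workable cut would be, say, $\epsilon\ge|\delta|$, giving $|\delta|^\alpha\log(1/|\delta|)\to 0$). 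Second, and more seriously, in the complementary regime $\epsilon\ll|\delta|$ the phrase ``compare with the already-treated case $\delta=0$'' is not an argument: the quantity $\int_a^b\frac{|\delta|^\alpha}{|x+\delta+i\epsilon|}\,\d x$ really is of order $|\delta|^\alpha\log(1/\epsilon)$, which is unbounded along, e.g., $\epsilon=\ee^{-1/\delta^2}$, so no absolute-value domination or uniform-integrability argument can control that piece; one must use cancellation near $x=-\delta$, where your numerator $f(x)-f(0)$ does not vanish.

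The fix is to center the H\"older subtraction at the actual singularity $-\delta$ rather than at $0$, which is precisely the device in the paper's proof (there $f(-\delta)$ is subtracted on a window $(-\gamma,\gamma)$, the real and imaginary parts are evaluated explicitly, and $\gamma\to0$ at the end). In your framework: write $\int_a^b\frac{f(x)-f(0)}{x+\delta+i\epsilon}\,\d x=\int_a^b\frac{f(x)-f(-\delta)}{x+\delta+i\epsilon}\,\d x+\bigl(f(-\delta)-f(0)\bigr)\int_a^b\frac{\d x}{x+\delta+i\epsilon}$. The second term is $O(|\delta|^\alpha)\cdot O(1)\to0$, since the logarithm from your Step 1 is uniformly bounded; the first term has integrand dominated by $C|x+\delta|^{\alpha-1}$, a uniformly integrable family, so pointwise convergence for $x\ne0$ plus Vitali (or a direct splitting) gives convergence to $\int_a^b\frac{f(x)-f(0)}{x}\,\d x$. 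With this replacement your proof closes, and it is then a slightly slicker variant of the paper's argument: you use one complex logarithm where the paper computes the real part (a log ratio) and the imaginary part (an arctangent giving $\pi f(0)$) separately, but the essential step — the H\"older subtraction at $-\delta$ — is the same, and omitting it is what created the gap.
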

\begin{proof}
We first write 
\begin{align}\label{EQA}
\int^b_a \frac{f(x)}{x+\delta + i\epsilon}\,\d x=\int^b_a \frac{f(x)(x+\delta)}{(x+\delta)^2 + \epsilon^2}\,\d x-\int^b_a \frac{i\epsilon f(x)}{(x+\delta)^2+ \epsilon^2}\,\d x.
\end{align}
Given $\gamma\in (0,1)$ we write the first integral on the right-hand side as 
\begin{align}\label{EQB}
\int^b_a \frac{f(x)(x+\delta)}{(x+\delta)^2 + \epsilon^2}\,\d x= &\int_{(a,b)\setminus (-\gamma,\gamma )}  \frac{f(x)(x+\delta)}{(x+\delta)^2 + \epsilon^2}\,\d x+\notag \\& +\int^\gamma_{-\gamma} \frac{(f(x)-f(-\delta))(x+\delta)}{(x+\delta)^2 + \epsilon^2}\,\d x+f(-\delta)\int^{\gamma}_{-\gamma} \frac{(x+\delta)}{(x+\delta)^2 + \epsilon^2}\,\d x.
\end{align}
For $|x|\geq \gamma $   and $|\delta|<\gamma/2$ we consider the first integral on the right-hand side  by noting that 
\begin{align}\label{EQC}
\left|  \frac{f(x)(x+\delta)}{(x+\delta)^2 + \epsilon^2} \right| , \left| \frac{f(x)}{(x+\delta )^2 + \epsilon^2 } \right| \lec \frac{|f(x)|}{|x+\delta|} +\frac{|f(x)|}{|x+\delta|^2}  \lec  \| f \|_\infty /\gamma^2 ,
\end{align}
and so applying the Dominated Convergence Theorem, we obtain
\[\int_{(a,b)\setminus (-\gamma , \gamma ) } \frac{f(x)(x+\delta)}{(x+\delta)^2 + \epsilon^2}\,\d x\to \int_{(a,b)\setminus (-\gamma , \gamma ) } \frac{f(x)}{x}\,\d x\]
as ${(\delta,\epsilon)\to (0,0^+)}$. For the second term on the right-hand side of \eqref{EQB},  
 we have
\begin{align}\label{EQE}
 \left|\int^\gamma_{-\gamma} \frac{(f(x)-f(-\delta))(x+\delta)}{(x+\delta)^2 + \epsilon^2}\,\d x \right|\lec \int^\gamma_{-\gamma} \frac{|x+\delta|^{1+\alpha}}{(x+\delta)^2 + \epsilon^2}\,\d x\lec \int^\gamma_{-\gamma} {|x+\delta|^{\alpha-1}}\,\d x\lec_\alpha \gamma^\alpha,
 \end{align}
 where we used the H\"older continuity assumption on $f$.
Thus 
\[\limsup_{(\delta,\epsilon)\to (0,0)}\left|\int^\gamma_{-\gamma} \frac{(f(x)-f(-\delta))(x+\delta)}{(x+\delta)^2 + \epsilon^2}\,dx \right|\le C_\alpha\gamma^\alpha.\]
For the last integral in \eqref{EQB}, we have
\[\lim_{(\delta,\epsilon)\to(0,0^+)}\int^{\gamma}_{-\gamma} \frac{f(-\delta)(x+\delta)}{(x+\delta)^2 + \epsilon^2}\,\d x=\lim_{(\delta,\epsilon)\to(0,0^+)}\frac{f(-\delta)}{2}\log\left(\frac{(\delta+\gamma)^2+\epsilon^2}{(\delta-\gamma)^2+\epsilon^2}\right)=0.\]
As for the imaginary part in  \eqref{EQA} we write $ \int^b_a= \int_{-\gamma}^\gamma + \int_{(a,b)\setminus (-\gamma ,\gamma )}$. For the first of the resulting terms we use \eqref{EQC} and the Dominated Convergence Theorem to obtain that
\[
\int_{(a,b)\setminus (-\gamma , \gamma )}  \frac{\epsilon f(x)}{(x+\delta)^2 + \epsilon^2}\,\d x \to 0
\]
as $(\delta , \varepsilon )\to (0,0^+)$. For the second one we see that 
\[ \left|\int^\gamma_{-\gamma} \frac{\epsilon (f(x)-f(-\delta))}{(x+\delta)^2 + \epsilon^2}\,\d x \right|\lec \int^\gamma_{-\gamma} \frac{\epsilon|x+\delta|^{\alpha}}{(x+\delta)^2 + \epsilon^2}\,\d x\lec \gamma^\alpha,\]
and
\[
f(-\delta ) \int_{-\gamma}^\gamma \frac{\epsilon}{(x+\delta)^2+\epsilon} \d x = f(-\delta ) \int_{(-\gamma+\delta)/\epsilon}^{(\gamma+\delta)/\epsilon} \frac{\d z}{1+z^2}  \to \pi f(0)\quad \text{ as } (\delta, c)\to (0,0^+).
\]
Collecting all the terms, we obtain
\[\lim_{(\delta, \epsilon)\to (0,0^{+}) } \int^b_a \frac{f(x)}{x+\delta + i\epsilon}\,\d x= -i\pi f(0)+ \left( \int^{-\gamma}_{a}\frac{f(x)}{x}\,\d x+\int_{\gamma}^{b}\frac{f(x)}{x}\,\d x\right) +O(\gamma^\alpha).\]
Letting $\gamma \to 0$ completes the proof.
\end{proof}
\section{Eigenvalue problems}
Here we consider the eigenvalue problem 
\eqnb\label{evalue_prob_app}
-\Psi'' + \beta^2 \Psi + M(y) \Psi=0 \qquad \text{ on } (-a,a),
\eqne
equipped with homogeneous boundary conditions $\Psi (-a) = \Psi (a) =0$, where $M(y)$ is a nonpositive  function with compact support in $\R$.

\begin{lem}\label{L_app}
Let $\beta_a^2 $ denote the largest eigenvalue of \eqref{evalue_prob_app}. Then $\beta_a^2 \to \beta $ as $a\to \infty$, where $\beta$ is the maximal eigenvalue of \eqref{evalue_prob_app} with $a=\infty$.
\end{lem}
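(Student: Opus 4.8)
The plan is to prove the convergence $\beta_a^2 \to \beta$ by a two-sided variational argument, using the minimization (Rayleigh quotient) characterization of the top eigenvalue. Concretely, since $M\le 0$ has compact support, the operator $-\partial_{yy} + M$ is bounded below, and for each $a\in (0,\infty]$ one has
\[
-\beta_a^2 = \min\left\{ \int_{-a}^a |\Psi'|^2 + \int_{-a}^a M |\Psi|^2 \; : \; \Psi \in H^1_0(-a,a),\ \int_{-a}^a |\Psi|^2 = 1 \right\},
\]
with the $a=\infty$ case giving $-\beta^2$ over $H^1(\R)$ (note $H^1_0(\R)=H^1(\R)$). I would first record that this minimum is attained, with a positive eigenfunction, by the standard Sturm--Liouville / direct-method argument, exactly as cited around \eqref{Kgeq0}; this is needed so that $\Phi_0$ (the $a=\infty$ minimizer) actually exists.

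The first inequality, $\liminf_{a\to\infty}\beta_a^2 \ge \beta^2$ (equivalently $\limsup(-\beta_a^2)\le -\beta^2$), is the easy direction: extending any admissible $\Psi\in H^1_0(-a,a)$ by zero gives an admissible competitor in $H^1(\R)$ with the same Rayleigh quotient, so $-\beta_a^2 \ge -\beta^2$ for every finite $a$, hence $\beta_a^2 \le \beta^2$ and in particular $\limsup_{a\to\infty}\beta_a^2\le\beta^2$. Wait --- this already gives monotonicity in the wrong-looking direction, so let me be careful: restricting to a smaller domain can only increase the minimum of the quotient, so $-\beta_a^2$ is nonincreasing in $a$ and bounded below by $-\beta^2$; thus $\beta_a^2$ is nondecreasing in $a$ and $\beta_a^2 \le \beta^2$ for all $a$. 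Therefore $\lim_{a\to\infty}\beta_a^2$ exists and is $\le \beta^2$.

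For the reverse inequality $\lim_{a\to\infty}\beta_a^2 \ge \beta^2$, the idea is to use the $a=\infty$ minimizer $\Phi_0$ as a near-optimal test function for large finite $a$. Since $\Phi_0\in H^1(\R)$ with $\|\Phi_0\|_{L^2}=1$, pick a smooth cutoff $\zeta_a$ equal to $1$ on $(-a/2,a/2)$, supported in $(-a,a)$, with $|\zeta_a'|\lesssim 1/a$; then $\zeta_a \Phi_0 \in H^1_0(-a,a)$, and one checks
\[
\int |(\zeta_a\Phi_0)'|^2 + \int M|\zeta_a\Phi_0|^2 \longrightarrow \int |\Phi_0'|^2 + \int M|\Phi_0|^2 = -\beta^2
\]
as $a\to\infty$, using $\|\Phi_0\|_{H^1(|y|>a/2)}\to 0$, the bound on $\zeta_a'$, and $M\in L^\infty$ with compact support (so the $M$-term is unaffected once $a$ exceeds the support of $M$). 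Normalizing $\zeta_a\Phi_0$ in $L^2$ (the normalization constant tends to $1$) and plugging into the minimization for $-\beta_a^2$ gives $-\beta_a^2 \le -\beta^2 + o(1)$, i.e. $\liminf_{a\to\infty}\beta_a^2 \ge \beta^2$. Combining the two directions yields $\beta_a^2\to\beta^2$, which is the claim (up to the harmless notational point that the statement writes ``$\to\beta$'' for the squared eigenvalue). The main --- though still routine --- obstacle is simply the bookkeeping in the cutoff estimate: controlling the cross term $\int 2\zeta_a\zeta_a'\Phi_0\Phi_0'$ and the $\int|\zeta_a'|^2|\Phi_0|^2$ term via Cauchy--Schwarz and the tail smallness of $\Phi_0$, together with verifying that the $L^2$-renormalization does not spoil the limit; no genuinely hard analysis is involved once the variational characterization and existence of minimizers are in hand.
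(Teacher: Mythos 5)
Your argument is correct, but it is genuinely different from what the paper does: the paper offers no proof at all for Lemma~\ref{L_app}, simply citing the classical singular Sturm--Liouville limit theory in \cite[Chapter~II]{titchmarsh}, where the spectrum of the whole-line problem is obtained as the limit of the regular problems on $(-a,a)$. You instead give a self-contained variational proof: the Rayleigh-quotient characterization of $-\beta_a^2$ (the same minimization principle the paper quotes near \eqref{Kgeq0}), domain monotonicity by zero extension to get $\beta_a^2$ nondecreasing and bounded above by $\beta^2$, and a cutoff $\zeta_a\Phi_0$ of the whole-line eigenfunction as a near-optimal competitor to get $\liminf_a \beta_a^2\ge\beta^2$; the cutoff bookkeeping (cross term, $|\zeta_a'|^2$ term, renormalization) is routine exactly as you describe, and the $M$-term is untouched once $a/2$ exceeds the support of $M$. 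What your route buys is elementarity and independence from ODE-specific machinery; what the citation buys is brevity and, in Titchmarsh's framework, information about the whole spectrum rather than just the top eigenvalue. Two small points to tidy up: (i) you mislabel the ``easy direction'' at the start ($\liminf\ge$ vs.\ $\limsup\le$) before correcting yourself, so state the monotonicity conclusion once, cleanly; (ii) the existence of the whole-line minimizer $\Phi_0$, which you rightly flag as needed, deserves one sentence of justification --- since $M\le 0$ is compactly supported the essential spectrum of $-\partial_{yy}+M$ on $\R$ is $[0,\infty)$, so the hypothesis (implicit in the lemma and explicit in the paper's application, where $\alpha_0^2>0$) that the infimum is strictly negative places it below the essential spectrum and guarantees it is an attained eigenvalue; without that hypothesis the limit could be $0$ with no eigenfunction, and your cutoff step would have nothing to cut off.
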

\begin{proof}
See \cite[Chapter~II]{titchmarsh}
\end{proof}

\bibliographystyle{plain}
\bibliography{literature}

\end{document}